\author{Andy Hammerlindl}
\title{Properties of compact center-stable submanifolds}
\keywords{partial hyperbolicity, attractors, center-stable submanifolds}
\subjclass{37D30, 37C70}
\def\saveenum{\xdef\@savedenum{\the\c@enumi\relax}}
\def\resetenum{\global\c@enumi\@savedenum}
\newcommand{\HHU}{Rodriguez Hertz, Rodriguez Hertz, and Ures}
\newcommand{\ccss}{compact $cs$-submanifold}
\newcommand{\ccsss}{compact $cs$-submanifolds}
\newcommand{\bbR}{\mathbb{R}}
\newcommand{\bbZ}{\mathbb{Z}}
\newcommand{\bbT}{\mathbb{T}}
\newcommand{\Es}{E^s}
\newcommand{\Ec}{E^c}
\newcommand{\Eu}{E^u}
\newcommand{\Ecu}{E^{cu}}
\newcommand{\Ecs}{E^{cs}}
\newcommand{\Ws}{W^s}
\newcommand{\Wu}{W^u}
\newcommand{\Wcu}{W^{cu}}
\newcommand{\inv}{^{-1}}
\newcommand{\invn}{^{-n}}
\newcommand{\del}{\partial}
\newcommand{\without}{\setminus}
\newcommand{\ep}{\epsilon}
\newcommand{\lam}{\lambda}
\newcommand{\Lam}{\Lambda}
\newcommand{\sig}{\sigma}
\newcommand{\al}{\alpha}
\newcommand{\bt}{\beta}
\newcommand{\qandq}{\quad \text{and} \quad}
\newcommand{\dist}{\operatorname{dist}}
\newcommand{\id}{\operatorname{id}}
\newcommand{\pis}{\pi^s}
\newcommand{\piu}{\pi^u}
\newcommand{\Hu}{H^u}
\newcommand{\Hs}{H^s}
\newcommand{\vs}{v^s}
\newcommand{\vcu}{v^{c u}}
\newcommand{\Cone}{\mathcal{C}}
\newcommand{\Cstar}{\Cone^*}
\newcommand{\subof}{\subset}
\newcommand{\sans}{\setminus}
\newcommand{\ti}{\times}
\newcommand{\Dpsi}{D_{\psi}}
\newcommand{\half}{\tfrac{1}{2}}
\newcommand{\phalf}{p \inv(\tfrac{1}{2})}
\numberwithin{equation}{section}
\newtheorem{thm}[equation]{Theorem}
\newtheorem{cor}[equation]{Corollary}
\newtheorem{lemma}[equation]{Lemma}
\newtheorem{prop}[equation]{Proposition}
\newtheorem{question}[equation]{\textbf{Question}}
\theoremstyle{remark}
\newtheorem*{notation} {\textbf{Notation}}
\providecommand{\acknowledgement}{{\noindent \textbf{Acknowledgements}}\quad}
\begin{document}

\maketitle

\begin{abstract}
    We show that a partially hyperbolic system can have at most a
    finite number of compact center-stable submanifolds.
    We also give sufficient conditions for these submanifolds to exist
    and consider the question of whether they can intersect each other.
\end{abstract}
\section{Introduction} 

Much of the early theory of partially hyperbolic dynamics
was developed by
Hirsch, Pugh, and Shub
in their foundational text,
\emph{Invariant Manifolds} \cite{HPS}.
The book first considers the case of an invariant compact submanifold
of the phase space where the dynamics normal to the submanifold
is hyperbolic.
Later chapters deal with systems where a partially hyperbolic splitting holds
on the entire phase space.
These two cases may overlap.
For instance, if the system has a global splitting of the form
$TM = \Eu \oplus \Ec \oplus \Es$,
it may also have a compact submanifold tangent to $\Ec$
and such a submanifold is therefore normally hyperbolic.

Recent discoveries show that a slightly different possibility exists.
\HHU{} constructed an example of a partially hyperbolic system on the 3-torus
with a compact submanifold, a 2-torus, tangent to the center and stable
directions, $\Ec \oplus \Es$ \cite{rhrhu2016coherent}.
This center-stable submanifold is transverse to the expanding unstable
direction and is therefore a normally repelling submanifold.
Based on this, the author constructed further examples
of compact center-stable submanifolds, both in dimension 3
and higher \cite{ham20XXconstructing}.

The paper establishes general properties for these
types of dynamical objects.
In particular, we show that any partially hyperbolic system
may have at most finitely many compact center-stable submanifolds
and we give sufficient conditions under which these objects exist.
Finally, we consider the consider the question of whether two of these
submanifolds can have non-empty intersection.

\section{Statement of results} 

A diffeomorphism $f$ of a closed connected manifold $M$
is \emph{(strongly) partially hyperbolic}
if there is a splitting of the tangent bundle
\[
    TM = \Es \oplus \Ec \oplus \Eu
\]
such that each subbundle is non-zero and
invariant under the derivative $Df$
and
\[
    \|Df v^s\| < \|Df v^c\| < \|Df v^u\|
    \qandq
    \|Df v^s\| < 1 < \|Df v^u\|
      \]
hold for all $x  \in  M$ and unit vectors
$v^s  \in  \Es(x)$, 
$v^c  \in  \Ec(x)$, and
$v^u  \in  \Eu(x)$.
There exist unqiue foliations $\Ws$ and $\Wu$ tangent to $\Es$ and $\Eu$.
An immersed submanifold $S \subof M$
is a \emph{center-stable submanifold} if it is tangent
to $\Ecs = \Ec \oplus \Es$.

\begin{thm} \label{thm:finite}
    A partially hyperbolic diffeomorphism
    has at most a finite number of
    compact center-stable submanifolds.
\end{thm}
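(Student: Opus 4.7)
The plan is to establish the following isolation property: there exists $\delta > 0$ such that any two distinct compact center-stable submanifolds lie at Hausdorff distance at least $\delta$ apart. Since the space of closed subsets of the compact manifold $M$ is totally bounded in the Hausdorff metric, the theorem follows immediately. The geometric input is that a cs-submanifold $S$ is normally repelling: its tangent bundle $\Ecs$ is complementary to the uniformly expanded direction $\Eu$.

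First I would prove a local uniqueness principle. Around any $x \in M$ choose a small chart in which $\Es, \Ec, \Eu$ are approximately straightened, with $\Eu$ playing the role of a vertical coordinate $z$. Any cs-submanifold passing through the chart appears locally as a graph $z = g(p)$ over the $\Ecs$-plane, and the tangency $TS = \Ecs$ forces $dg \equiv 0$; so $S$ is locally a horizontal slice $\{z = z_0\}$. In particular, two cs-submanifolds that meet inside a chart coincide on the whole chart. Uniform continuity of the splitting on the compact manifold $M$ lets both the chart radius and the graph estimate be chosen independently of $x$.

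The remaining step is to upgrade this local picture: sufficiently Hausdorff-close compact cs-submanifolds $S_1$ and $S_2$ must actually intersect, after which local uniqueness propagates the agreement along connected components to all of $M$. Here I would use that $f$ sends compact cs-submanifolds to compact cs-submanifolds, so it permutes the entire collection of them. Any unstable separation between $S_1$ and $S_2$ is expanded by $f$; under iteration the pair $f^n(S_1), f^n(S_2)$ is pushed apart in $\Eu$, yet local uniqueness applied to $f^n(S_i)$ forces a bounded-displacement estimate within the tube $\bigcup_{x \in f^n(S_1)} \Wu_\epsilon(x)$, producing a contradiction unless $S_1 = S_2$. The main obstacle will be carrying out this dynamical argument without assuming a priori that individual cs-submanifolds are $f$-invariant; one likely needs a pigeonhole argument on Hausdorff-accumulation points of the orbit $\{f^n(S_1)\}$ to reduce to a finite $f$-periodic subcollection on which the expansion-versus-compactness contradiction runs cleanly, or else to first prove the auxiliary statement that each compact cs-submanifold is itself $f$-invariant.
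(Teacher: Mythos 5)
Your overall scheme (a uniform Hausdorff separation $\delta>0$ between distinct compact $cs$-submanifolds, plus total boundedness of the Hausdorff metric) matches the final step of the paper, but the central step of your argument contains a genuine gap. The ``local uniqueness principle'' is not available: the bundle $\Ecs$ is only continuous, a chart can only \emph{approximately} straighten the splitting, so tangency to $\Ecs$ does not force $dg\equiv 0$, and a $cs$-submanifold is not locally a horizontal slice. More seriously, $\Ecs$ need not be uniquely integrable, so two submanifolds tangent to $\Ecs$ that meet at a point need not coincide on a neighbourhood. This is precisely what the paper leaves open (the Question after \cref{thm:nocalzones}) and what \cref{thm:calzone} is designed to illustrate: two distinct tori tangent to the same center-stable direction which intersect, showing ``there is no local obstruction to having an intersection.'' Consequently no chart computation can yield your coincidence claim, and the subsequent assertion that Hausdorff-close compact $cs$-submanifolds must actually intersect is likewise unsupported; the theorem only asserts finiteness, not disjointness, and the proof must not pass through disjointness.

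The paper's route avoids this entirely by measuring closeness \emph{along unstable leaves} rather than asking for intersection. Transversality of $\Ecs$ and $\Eu$ gives $r>0$ so that $d_H(S,T)<r$ forces $\dist_u(S,T)<\tfrac12$ (\cref{lemma:rhalf}); iterates of a compact $cs$-submanifold become ``well positioned,'' and $f^{-k}$ acts as a fiber contraction on a tube of unstable segments, so the fiber contraction theorem produces a \emph{unique} nearby periodic $C^0$ submanifold (\cref{lemma:fibercontract}). Pigeonholing in the compact Hausdorff metric (your accumulation-point idea, made precise in \cref{lemma:closelam}) then shows every compact $cs$-submanifold is itself periodic -- the auxiliary statement you correctly guessed would be needed -- and the uniqueness clause of the fiber-contraction lemma yields $d_H(S,T)>r$ for distinct submanifolds, after which compactness of the Hausdorff metric gives finiteness. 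So your instincts about periodicity, expansion under iteration, and Hausdorff compactness point in the right direction, but the local-coincidence and forced-intersection steps would have to be replaced by the $\dist_u$/fiber-contraction uniqueness argument; as stated they fail.
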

From this, the following result could be proved.

\begin{thm} \label{thm:periodic}
    Every compact center-stable submanifold is periodic.
\end{thm}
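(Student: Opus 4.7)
The plan is to deduce Theorem \ref{thm:periodic} as an essentially immediate corollary of Theorem \ref{thm:finite}. The key observation is that $f$ itself acts on the collection $\mathcal{S}$ of all compact center-stable submanifolds of $M$, and a bijection of a finite set has only periodic orbits.

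The first step is to verify that $f(\mathcal{S}) \subseteq \mathcal{S}$. Fix $S \in \mathcal{S}$. Since $f$ is a diffeomorphism and $S$ is a compact immersed submanifold, $f(S)$ is again a compact immersed submanifold; explicitly, if $\iota \colon N \to M$ is the immersion realizing $S$, then $f \circ \iota$ realizes $f(S)$. To see that $f(S)$ is tangent to $\Ecs$, one uses $Df$-invariance of the center-stable bundle:
\[
    T_{f(x)} f(S) \;=\; Df_x\bigl(T_x S\bigr) \;=\; Df_x\bigl(\Ecs(x)\bigr) \;=\; \Ecs(f(x))
\]
for every $x \in S$. Hence $f(S) \in \mathcal{S}$. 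The same reasoning applied to $f\inv$ gives $f\inv(\mathcal{S}) \subseteq \mathcal{S}$, so $f$ restricts to a bijection on $\mathcal{S}$.

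By Theorem \ref{thm:finite}, $\mathcal{S}$ is finite, so $f|_\mathcal{S}$ is a permutation of a finite set. Every element of such a permutation lies on a finite cycle; hence for each $S \in \mathcal{S}$ there exists $n \geq 1$ with $f^n(S) = S$, which is the definition of periodicity. No substantive obstacle arises at this stage, as all of the analytic difficulty has been absorbed into the finiteness theorem; the only care needed here is the routine but necessary check that the $Df$-invariance of $\Ecs$ really does promote invariance of a single submanifold's tangent bundle, which is what the displayed computation above records.
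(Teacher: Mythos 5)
Your deduction is logically sound as a deduction: the paper itself remarks, immediately after stating \cref{thm:periodic}, that ``From this [\cref{thm:finite}], the following result could be proved.'' The invariance check $Df_x(T_x S) = Df_x(\Ecs(x)) = \Ecs(f(x))$ is correct and the permutation argument on a finite set is fine. However, the argument is \emph{circular within the paper's actual development}. The paper explicitly reverses the logical order you assumed: it proves \cref{thm:periodic} first and then uses it to establish \cref{thm:finite}. Indeed, the paper's proof of \cref{thm:finite} opens with the sentence ``All compact $cs$-submanifolds are periodic,'' invokes periodicity to conclude that they are all well positioned, and then applies the uniqueness clause of \cref{lemma:fibercontract} (which again needs the two competing submanifolds to be periodic) to show any two distinct ones are Hausdorff-separated by a definite amount $r>0$. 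So citing \cref{thm:finite} to prove \cref{thm:periodic} is not a legitimate move here unless you first supply an independent proof of finiteness, which you have not done.

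The paper's own argument for \cref{thm:periodic} is genuinely different and does the real analytic work. In outline: one replaces $f$ by an iterate so that unstable vectors at least double in length; introduces the leafwise unstable distance $\dist_u$ and the notion of a ``well-positioned'' set (points pairwise far apart along unstable leaves); shows via a fiber-contraction argument (\cref{lemma:fibercontract}) that if a well-positioned compact $cs$-submanifold $S$ nearly recurs in the sense $\dist_u(S, f^k(S)) < \tfrac12$, then there is a \emph{unique} well-positioned periodic $C^0$ submanifold $\Lam$ with $\dist_u(S,\Lam) < 1$; uses compactness of the Hausdorff metric to find arbitrarily close such $\Lam$'s (\cref{lemma:closelam}); and finally observes that the uniqueness forces the approximating sequence of periodic submanifolds to stabilize, so $S$ equals one of them. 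To repair your proposal you would either need to reproduce this contraction-plus-compactness argument, or else give an independent proof of \cref{thm:finite} that does not presuppose periodicity.
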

However,
in \cref{sec:periodic}
we actually establish \cref{thm:periodic} first and then use the result
to show \cref{thm:finite}.

\medskip{}

While being tangent to $\Ecs$ clearly requires the submanifold to be at least $C^1$,
it is equivalent to a condition which may be stated for $C^0$ submanifolds.

\begin{thm} \label{thm:fromzero}
    Suppose $f:M \to M$ is partially hyperbolic
    and $\Lam \subset M$ is a periodic compact $C^0$ submanifold.
    Then $\Lam$ is a $C^1$ submanifold tangent to $\Ecs$
    if and only if $\Wu(x) \cap \Lam = \{x\}$
    for all $x \in \Lam$.
\end{thm}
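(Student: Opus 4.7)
The plan is to prove the two directions separately.

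For the only-if direction, assume $\Lam$ is $C^1$ and tangent to $\Ecs$. Since $\Ecs$ and $\Eu$ are complementary, a compactness argument gives uniform constants $\ep,\delta > 0$ such that for every $y \in \Lam$ and every $w \in \Lam \cap B_\delta(y)$, $\Wu_\ep(w) \cap \Lam \cap B_\delta(y) = \{w\}$. To globalize, suppose $y \in \Wu(x) \cap \Lam$ with $y \neq x$, and let $k$ be the period of $\Lam$. The unstable distance $d^u(f^{-nk}(x), f^{-nk}(y))$ tends to $0$, so by compactness of $\Lam$ a subsequence $f^{-n_jk}(x) \to z \in \Lam$, forcing $f^{-n_jk}(y) \to z$ as well. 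For $j$ large both iterates lie in $B_\delta(z)$ at unstable distance less than $\ep$, so local uniqueness at $z$ forces $f^{-n_jk}(x) = f^{-n_jk}(y)$, and hence $x = y$: contradiction.

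For the converse, after replacing $f$ by $f^k$ I assume $f(\Lam) = \Lam$. The hypothesis implies each local $\Wu$-plaque meets $\Lam$ in at most one point, and since $\Lam$ has dimension $\dim \Ecs$, invariance of domain presents $\Lam$ locally as a continuous graph over any small transversal to $\Wu$. The plan is then to upgrade this to a $C^1$ graph tangent to $\Ecs$ via two cone-field arguments.

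The central step is \emph{uniform unstable-cone avoidance}: there exists $\bt > 0$ such that for every $x \in \Lam$ and every $y \in \Lam$ near $x$, the secant $y - x$ in a local chart makes angle at least $\bt$ with $\Eu(x)$. I argue by contradiction, taking a sequence $(x_n, y_n) \in \Lam \times \Lam$ with secant directions tending to $\Eu$, and pushing forward by $f^{N_n}$ where $N_n$ is chosen so that $f^{N_n}(x_n)$ and $f^{N_n}(y_n)$ are separated at a fixed small scale $\ep_0$. Because the $\Ecs$-component of the displacement decays relative to the $\Eu$-component by a factor of the form $(\mu/\lam)^{N_n}$, the $\Ecs$-displacement vanishes in the limit. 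Using fake center-stable foliations (or a plaque family tangent to $\Ecs$) as a transversal to $\Wu$, the two limit points of $f^{N_n}(x_n)$ and $f^{N_n}(y_n)$ are forced onto a common $\Wu$-leaf in $\Lam$, contradicting $\Wu(z) \cap \Lam = \{z\}$ at the limit base point $z$. This is the main obstacle: one must control the nonlinear corrections carefully so that the vanishing tangent-space $\Ecs$-displacement translates into exact coincidence of $\Wu$-leaves.

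Once uniform unstable-cone avoidance holds, the approximate tangent cone of $\Lam$ at each $x \in \Lam$ is disjoint from a fixed $\Eu$-cone. This tangent-cone field is $Df$-invariant (because $f(\Lam) = \Lam$), and the strict $Df^{-1}$-contraction of the center-stable cone field then forces the tangent cone at each $x$ to lie inside every $\al$-cone around $\Ecs(x)$. Hence $T_x \Lam = \Ecs(x)$ pointwise, and continuity of the distribution $\Ecs$ upgrades pointwise differentiability to $C^1$ regularity, completing the proof.
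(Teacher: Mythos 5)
Your overall strategy for the forward direction is sound, and the concluding cone-contraction argument ($\Lam$-invariance of the tangent cone field, plus $Df^{-n}(\Cstar) \to \Ecs$) is the same mechanism the paper uses to finish. Your ``only if'' direction is also correct and is essentially the argument the paper is implicitly invoking when it says this direction follows from the results of the finiteness section.

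However, the proposal has a genuine unresolved gap at exactly the step you flag as ``the main obstacle'': you need to show that when a sequence of pairs in $\Lam \times \Lam$ is pushed forward to a fixed scale with $\Ecs$-displacement tending to zero, the limit points lie on a common unstable leaf. This is not a detail to be handled ``carefully''; it is the technical heart of the argument, and nothing in your sketch closes it. The vanishing of the \emph{linear} $\Ecs$-component of the secant at scale $\ep_0$ does not by itself place the limit point on $\Wu$; one must control what happens under backward iteration of the limit displacement, which is precisely a stable/unstable manifold characterization. The paper addresses this with a dedicated preparatory apparatus: it replaces $Df$ by the nonlinear map $F$ induced via exponential charts, proves that the cone estimates survive at small scale (Lemma~\ref{lemma:Fdelta}, Lemma~\ref{lemma:coneback}), and, crucially, proves Lemma~\ref{lemma:Fstable}, which says that a displacement whose forward $F$-iterates stay small and in the stable cone must lie on the actual stable leaf. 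That lemma is then applied (after exchanging $\Eu$ and $\Es$) to derive the uniform cone estimate of Lemma~\ref{lemma:uniformcone}. Your alternative suggestion of projecting via a fake center-stable plaque family could in principle substitute for Lemma~\ref{lemma:Fstable}, but you would still need a quantitative estimate showing the projection error is controlled by the $\Ecs$-displacement under iteration of the nonlinear map $F$, which is not provided. Until this is supplied, the proof is incomplete at its central step.

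There is also a minor structural difference worth noting: the paper avoids the compactness-by-contradiction argument for uniform cone avoidance. Instead it proves uniformity directly in Lemma~\ref{lemma:uniformcone} via a finite cover of the compact annular region $\{(x,y) \in \Lam\times\Lam : \ep \le d(x,y) \le \delta\}$, combined with forward invariance of $\Cstar$ to treat very close pairs. This yields a clean quantitative statement ($v_n \in \Cstar$ within $N$ steps) and then an explicit sequence $\ep_n$ with $d(x_0,y_0) < \ep_n \Rightarrow v_0 \in Df^n(\Cstar)$, which is what makes the final ``tangent cone inside every $\alpha$-cone'' step work directly. Your contradiction argument, if the missing lemma were supplied, would also reach this conclusion, but the paper's version gives the explicit rates that are easiest to feed into the cone-intersection identity $\bigcap_{n\ge 0} Df^{n}(\Cstar) = \Ecu$.
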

This also gives a way to find periodic submanifolds from non-periodic ones.

\begin{thm} \label{thm:toperiodic}
    Suppose $f:M \to M$ is partially hyperbolic,
    $k  \ge  1$,
    and $S \subset M$ is a compact $C^0$ submanifold such that
    \begin{math}
        \Wu(x) \cap S = \{x\}
      \end{math}
    and 
    \begin{math}
        \Wu(x) \cap f^k(S)  \ne  \varnothing
      \end{math}
    for all $x \in S$.
    Then
    there exists a compact center-stable submanifold.
\end{thm}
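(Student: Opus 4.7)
The plan is to construct the desired submanifold as a Hausdorff limit of the backward iterates $S_{-n} := f^{-nk}(S)$, organized via unstable holonomy.

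First I would set up unstable holonomy between consecutive iterates. Because $f^k(S)$ inherits the property $\Wu(y) \cap f^k(S) = \{y\}$ from $S$, the hypothesis $\Wu(x) \cap f^k(S) \ne \varnothing$ forces the intersection to be exactly one point $y_x$, and $x \mapsto y_x$ is a homeomorphism $S \to f^k(S)$ by continuity of the unstable foliation. Conjugating by powers of $f^k$ produces, for every $m \in \bbZ$, a homeomorphism $h_m : S_m \to S_{m+1}$ whose inverse sends $z \in S_{m+1}$ to the unique point of $\Wu(z) \cap S_m$.

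Next I would build a continuous map $\phi_\infty : S \to M$ whose image will be the candidate submanifold. For $x \in S$ and $n \ge 0$ let $\phi_n(x)$ be the result of applying $h_{-1}^{-1}, \ldots, h_{-n}^{-1}$ successively; equivalently, $\phi_n(x)$ is the unique point of $\Wu(x) \cap S_{-n}$. Put $L_0 := \sup_{x \in S} d^u(x, y_x) < \infty$ (finite by continuity and compactness), and let $\lam < 1$ be a partial hyperbolicity constant for which $f^{-k}$ contracts $\Wu$-leaf distances by at most a factor $\lam$. Since $f^{nk}$ maps the $\Wu$-arc from $\phi_n(x)$ to $\phi_{n-1}(x)$ onto the $\Wu$-arc from some $z \in S$ to $y_z \in f^k(S)$, the former arc has length at most $\lam^n L_0$. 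Hence $\phi_n$ is uniformly Cauchy in $M$ and converges to a continuous $\phi_\infty$ with $\phi_\infty(x) \in \Wu(x)$ (using completeness of the leaf metric). Setting $\Lam := \phi_\infty(S)$: if $\phi_\infty(x) = \phi_\infty(x')$ then this point lies in $\Wu(x) \cap \Wu(x')$, forcing $\Wu(x) = \Wu(x')$ and so $x = x'$. Thus $\phi_\infty$ is injective, $\Lam$ is compact and homeomorphic to $S$, and continuity of $f^k$ in the Hausdorff topology gives $f^k(\Lam) = \lim_n S_{-n+1} = \Lam$. The same leaf argument shows $\Wu(p) \cap \Lam = \{p\}$ for all $p \in \Lam$.

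The main obstacle is showing that $\Lam$ is genuinely a $C^0$ submanifold of $M$, not merely a topological manifold abstractly homeomorphic to $S$. For this I would argue locally: at each $p \in \Lam$, choose a chart of $M$ in which the unstable foliation appears as a foliation transverse to a small disk $D_p$ through $p$ of dimension $\dim \Ecs$, and let $\pi$ denote projection along unstable plaques onto $D_p$. Restricted to a neighborhood of $\phi_\infty^{-1}(p)$ in $S$, the composition $\pi \circ \phi_\infty$ is a continuous injection into $D_p$ between topological manifolds of equal dimension $\dim \Ecs$; by invariance of domain its image is open in $D_p$ and it is a homeomorphism onto that image. This exhibits $\Lam$ near $p$ as the graph over an open subset of $D_p$ of a continuous function into the unstable direction, making $\Lam$ a $C^0$ submanifold. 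Then $\Lam$ is a periodic compact $C^0$ submanifold with $\Wu(p) \cap \Lam = \{p\}$ for all $p$, so \cref{thm:fromzero} delivers that $\Lam$ is $C^1$ and tangent to $\Ecs$, providing the desired compact center-stable submanifold.
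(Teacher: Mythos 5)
Your argument is correct and reaches the same place, but you essentially re-prove the relevant piece of the fiber contraction theorem by hand instead of invoking it. The paper's proof is only a few lines: it observes $\dist_u(S, f^k(S)) < \infty$, applies a deep backward iterate $f^{-n}$ to shrink this $\dist_u$ gap below $\tfrac{1}{2}$, notes that $f^{-n}(S)$ is automatically well positioned since $\Wu(x) \cap S = \{x\}$, invokes \cref{lemma:fibercontract} (which runs the HPS fiber contraction theorem to produce the periodic $C^0$ submanifold $\Lam$), and then hands $\Lam$ to \cref{thm:fromzero}. What you do is unpack the contraction directly: you set up the holonomies $h_m$, take the uniformly Cauchy limit $\phi_\infty$ along the geometric decay $\lam^n L_0$, and then argue via invariance of domain that the limit set $\Lam = \phi_\infty(S)$ is genuinely a $C^0$ submanifold of $M$, a fact the fiber contraction theorem delivers for free through the section structure. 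Your route buys self-containment at the cost of length. One small point to shore up in your first paragraph: surjectivity of the holonomy $x \mapsto y_x$ onto $f^k(S)$ (which you need for the inverse maps $h_m^{-1}$ to be globally defined, and which the paper equally needs for $\dist_u(S, f^k(S)) < \infty$) does not follow from ``continuity of the unstable foliation'' alone; you should note that the map is a continuous injection between compact $C^0$ manifolds of equal dimension, so by invariance of domain its image is clopen inside each component of $f^k(S)$, and then count components.
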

Theorems \ref{thm:fromzero} and \ref{thm:toperiodic} are proved in
\cref{sec:fromzero}.

The next theorem assumes a one-dimensional unstable direction.
It basically states that if a region $M_0 \subof M$
has two boundary components and the ends of unstable curves inside this region
tend towards the boundary in a uniform way,
then there must be a 
compact center-stable submanifold inside the region.

\begin{thm} \label{thm:phcross}
    Let $f$ be a partially hyperbolic diffeomorphism
    of a manifold $M$,
    $M_0$ a compact connected submanifold of $M$ with boundary,
    $g:M_0 \to [0,1]$ a continuous function,
    and $\ell > 0$ such that

    \begin{enumerate}
        \item $\dim \Eu = 1$,

        \item $\dim M = \dim M_0$,

        \item $f(M_0) = M_0$,

        \item if $x \in \del M_0$, then $\Wu(x) \subset \del M_0$,

        \item $g(\del M_0) = \{0,1\}$,

        \item if $x \in M_0$, $0 < g(x) < 1$, $y \in \Wu(x)$, and $d_u(x,y) > \ell$,
        then
        $g(y) \in \{0,1\}$, and

        \item if $\al : \bbR  \to  M_0$ is a parameterized unstable leaf,
        then
        \[            \lim_{t \to +\infty} g \al(t) = 
            \lim_{t \to +\infty} g f \al(t)
            \qandq
            \lim_{t \to -\infty} g \al(t) = 
            \lim_{t \to -\infty} g f \al(t).
        \]  \end{enumerate}
    Then, there is a compact center-stable submanifold in the
    interior of $M_0$.
\end{thm}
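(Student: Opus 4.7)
The plan is to produce a compact topological codimension-one submanifold $S \subset \interior(M_0)$ meeting the hypotheses of \cref{thm:toperiodic}, which will then supply the desired compact center-stable submanifold. I would begin by analyzing $g$ along unstable leaves. Condition~(6) forces the set $\{x \in L : g(x) \in (0,1)\}$ on any unstable leaf $L \subset M_0$ to have $W^u$-diameter at most $\ell$: two of its points at greater $d_u$-distance would directly contradict~(6). Combined with continuity of $g$ and connectedness of each half-leaf beyond this transition region, $g|_L$ is eventually constant in $\{0,1\}$ at each end, giving well-defined asymptotic values $g^-(L), g^+(L) \in \{0,1\}$. Condition~(7) then yields $g^\pm \circ f = g^\pm$, so the partition of unstable leaves in $M_0$ by the pair $(g^-, g^+)$ is $f$-invariant.

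Next I would produce ``$(0,1)$-crossing'' leaves, meaning leaves with $g^-(L) = 0$ and $g^+(L) = 1$ (after fixing an orientation of $\Eu$, passing to an orientation double cover of $M_0$ if needed). This requires a topological argument: since $\partial M_0$ contains components with $g \equiv 0$ and with $g \equiv 1$ and is $\Wu$-saturated by condition~(4), one considers a path in $M_0$ joining the two boundary types, locates a jump of the (non-continuous) function $g^+$ along this path, and uses condition~(6) together with $\dim \Eu = 1$ to show that any such jump corresponds to a crossing leaf rather than two non-crossing leaves sharing a boundary. On each $(0,1)$-crossing leaf $L \subset \interior(M_0)$, define the canonical point $x_L := \alpha_L(t_L)$, where $\alpha_L$ is the positive arc-length parameterization and $t_L := \inf\{t : g\alpha_L(t) = 1/2\}$, and set $S := \{x_L : L \text{ a } (0,1)\text{-crossing leaf in } \interior(M_0)\}$. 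By construction $S \subset g^{-1}(1/2) \cap \interior(M_0)$ and $\Wu(x) \cap S = \{x\}$ for every $x \in S$. The return condition $\Wu(x) \cap f^k(S) \neq \varnothing$ holds by an equivariance argument: for $x \in S$ and $k \geq 1$, the leaf $f^{-k}(\Wu(x))$ is again a $(0,1)$-crossing leaf (crossings form an $f$-invariant family), so $f^k(x_{f^{-k}(\Wu(x))}) \in \Wu(x) \cap f^k(S)$.

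The remaining task is to show that $S$ is compact and a topological codimension-one submanifold. For compactness, if $x_{L_n} \to x$ then $L_n \to \Wu(x)$ in the foliation topology (continuous dependence of $\Wu$ on basepoints), the uniform bound $\ell$ from condition~(6) confines all nearby transition regions to a compact portion of $\Wu(x)$, and continuity of $g$ forces $x = x_{\Wu(x)} \in S$. For the submanifold structure, in a foliated chart $D^{cs} \times I^u$ around $x \in S$, the uniqueness $\Wu(y) \cap S = \{y\}$ exhibits $S$ locally as the graph of a section $\sigma : D^{cs} \to I^u$, and the same compactness argument promotes $\sigma$ to a continuous section, yielding a $C^0$ codimension-one submanifold. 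With this $S$ in hand, \cref{thm:toperiodic} produces a compact center-stable submanifold in the interior of $M_0$.

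The main obstacle will be proving that the selection $L \mapsto x_L$ is continuous. The function $g|_L$ can in principle oscillate through $1/2$ several times within the transition arc of length at most $\ell$, so the ``first hit of $1/2$'' could jump discontinuously between nearby leaves. Ruling this out is the heart of the argument: one must combine condition~(6) (uniform length bound on the transition region) with $\dim \Eu = 1$ (so the transition region is a bounded one-dimensional set) to show that any limit of first-hit points is itself a first-hit point. If the direct argument fails, a more symmetric canonical choice — for example, the leftmost parameter where $g = 1/2$ after which $g$ never again equals $0$, or the midpoint of the maximal transition interval — should admit the same compactness argument and play the role of $x_L$.
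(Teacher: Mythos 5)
Your overall strategy — find a compact $C^0$ cross-section $S$ in $\interior M_0$ and feed it into \cref{thm:toperiodic} — is exactly the paper's strategy, and your analysis of the asymptotic data $g^\pm$, their $f$-invariance via condition (7), and the reduction of orientability to a double cover are all sound. The gap is the one you yourself flag in the last paragraph, and it is genuine: the ``first hit of $1/2$'' selection $L \mapsto x_L$ is \emph{not} continuous in general, and the two alternatives you suggest also fail. For the first hit: take a family of $(0,1)$-crossing leaves $L_s$ on which $g$ rises to $1/2 - s$, dips, then rises to $1$; as $s \to 0^+$ the first hit of $1/2$ jumps from the final rise back to the first peak. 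For the ``midpoint of the maximal transition interval'': take a family on which the transition set is $(0,1) \cup (2,3)$ for $s>0$ but the first bump has height $s \to 0$, so that the convex hull of $\{0 < g < 1\}$ jumps from $[0,3]$ to $[2,3]$. Condition (6) controls the \emph{length} of the transition region but gives no control over oscillation within it, so no pointwise selection rule based on $g$ alone can be expected to be continuous.

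The paper resolves this with Fuller's averaging trick, which you should use instead of a pointwise selection. Let $\psi$ be a $C^0$ flow whose orbits are the unstable leaves in $M_0$ (this is where $\dim \Eu = 1$ and orientability are used), and define
\[
    g_n(x) = \tfrac{1}{2n}\int_{-n}^{n} g(\psi_t(x))\,dt, \qquad
    p(x) = \sum_{n \ge 1} 2^{-n} g_n(x).
\]
Because each half-orbit in $X_{0,1}$ is eventually constant at $0$ or $1$, the derivative of $p$ along the flow is strictly positive on $X_{0,1}$, so $p^{-1}(\tfrac12)$ meets each $X_{0,1}$-orbit exactly once and is, locally, a flow-transverse $C^0$ hypersurface; no continuity argument for a pointwise selection is needed. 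One then checks $X_{0,1} \cap p^{-1}(\tfrac12)$ is clopen in $p^{-1}(\tfrac12)$, hence a finite union of compact components; picking one gives $S$, and the rest of your argument (the $f$-equivariance of crossing leaves giving $\Wu(x) \cap f^k(S) \ne \varnothing$, then \cref{thm:toperiodic}) goes through as you describe. Your path-and-jump argument for the existence of at least one $(0,1)$- or $(1,0)$-crossing leaf also needs tightening: the clean version is to show $X_{0,0}$ and $X_{1,1}$ are closed and invoke connectedness of $M_0$, rather than tracking a jump of $g^+$ along a path.
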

This result will be used in an upcoming paper
as a critical step in giving a classification of all partially hyperbolic
systems in dimension three which have center-stable tori.
\Cref{sec:cross} gives the proof of \cref{thm:phcross}.

\medskip

The proofs of the above results
never use the sub-splitting $\Ecs = \Ec \oplus \Es$ of the center-stable
bundle.
Therefore, all of the above results also hold for \emph{weakly}
partially hyperbolic systems, where the diffeomorphism 
has 
an invariant splitting of the form  $TM = \Ecs \oplus \Eu$.
For further discussion of weak versus strong
partial hyperbolicity,
see sections 1 and 6 of \cite{hp20XXsurvey} and the references therein.

\medskip

Note that \cref{thm:finite} above only shows finiteness;
it does not say anything about disjointedness.

\begin{question}
    Can two distinct compact center-stable submanifolds
    have non-empty intersection?
\end{question}
In the case of strongly partially hyperbolic systems in dimension 3,
we have a number of special tools at our disposal
including branching foliations and Anosov tori
\cite{BBI2, rhrhu2011tori},
and we can answer this question in the negative.

\begin{thm} \label{thm:nocalzones}
    In a 3-dimensional strongly
    partially hyperbolic system,
    the compact center-stable submanifolds
    are pairwise disjoint.
\end{thm}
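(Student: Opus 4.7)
The plan is a proof by contradiction. Suppose $L_1$ and $L_2$ are distinct compact center-stable submanifolds with $L_1 \cap L_2 \ne \varnothing$ and derive $L_1 = L_2$. Pick $x \in L_1 \cap L_2$. By \cref{thm:periodic} both $L_i$ are periodic, so after replacing $f$ by a common iterate we may assume $f(L_i) = L_i$ for $i = 1,2$; this replacement leaves $\Es$, $\Ec$, $\Eu$, $\Ws$, $\Wu$ and the collection of compact $cs$-submanifolds unchanged. Each $L_i$ is a closed surface whose tangent bundle splits as a sum of line bundles $\Es|_{L_i}\oplus\Ec|_{L_i}$, so $\chi(L_i) = 0$ and $L_i$ is either a $2$-torus or a Klein bottle.

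The central ingredient is the Anosov tori theorem of \HHU{} \cite{rhrhu2011tori}, which asserts that any $C^1$ embedded $2$-torus invariant under a partially hyperbolic diffeomorphism of a closed $3$-manifold carries restricted dynamics topologically conjugate to a hyperbolic linear automorphism of $\bbT^2$. Apply this to each $L_i$, passing to an appropriate double cover of $M$ first in the event that $L_i$ happens to be a Klein bottle. Since $L_i$ is a $C^1$ surface tangent to $\Ecs$ and $\Es \subset \Ecs$, a standard subfoliation argument (integrate the $1$-dimensional sub-distribution $\Es|_{L_i}$ within $L_i$ and invoke uniqueness of $\Ws$) places the entire leaf $\Ws(x)$ inside $L_i$, and on $L_i$ it coincides with the stable manifold of $x$ for $f|_{L_i}$.

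For a hyperbolic linear automorphism of $\bbT^2$ the stable foliation is minimal---every leaf is a dense irrational line---and this feature is preserved under topological conjugation. Hence $\Ws(x)$ is dense in each $L_i$; since $L_2$ is closed in $M$ we conclude $L_1 = \overline{\Ws(x)} \subset L_2$, and by symmetry $L_1 = L_2$, a contradiction. The main obstacle I anticipate is the topological bookkeeping required to handle Klein bottles and one-sided tori: one must arrange a finite cover of $M$ in which the submanifold becomes a two-sided torus and the partially hyperbolic splitting lifts cleanly, so that both the branching center-stable foliations of \cite{BBI2} and the Anosov tori theorem of \cite{rhrhu2011tori} are directly applicable. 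Once this is arranged, the dynamical core of the argument---density of stable leaves on an Anosov torus---immediately forces two center-stable tori that share even a single point to coincide.
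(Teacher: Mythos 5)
Your argument does not work: the key ``Anosov tori theorem'' you invoke is not what \cite{rhrhu2011tori} proves, and in fact the statement you attribute to it is false. The theorem of \HHU{} in \cite{rhrhu2011tori} says that the restriction of $f$ to an invariant torus is \emph{isotopic} to a hyperbolic linear automorphism, and deduces topological constraints on the ambient $3$-manifold; it does not say the restriction is topologically \emph{conjugate} to an Anosov map. The restricted dynamics on a compact $cs$-torus is typically a derived-from-Anosov map with a repelling periodic orbit --- see the example of \cite{rhrhu2016coherent} and \cref{sec:calzone} of the present paper, where $f|_{T_+}$ fibers over a $\bbT^2$ map $g$ with a source $q$. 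Such a map cannot be Anosov.

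As a consequence, the crucial density step collapses. The $\Es$-subfoliation of a $cs$-torus is the stable foliation of the DA map, which need \emph{not} be minimal: in the DA construction the complement $K = \bbT^2 \sans B(q)$ of the basin of the source is a proper, closed, $\Eu$-saturated (after swapping $s\leftrightarrow u$, $\Es$-saturated) invariant set. In the calzone example of \cref{sec:calzone}, the two $cu$-tori $T_+$ and $T_-$ intersect exactly along $K\ti 0$, a proper closed $\Eu$-saturated set --- so ``density of the leaf through an intersection point'' simply fails for that system. That example only escapes \cref{thm:nocalzones} because the splitting is not globally partially hyperbolic, but it shows the disjointness cannot be deduced by the local, foliation-theoretic reasoning you propose; one genuinely needs a global input. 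The paper's proof supplies exactly that: it uses \cite{rhrhu2011tori} only to pin down the topology of $M$ (torus or suspension), lifts to the universal cover, uses the Franks semiconjugacy to show $\piu$ and $\pis$ are bounded on forward iterates of an unstable segment trapped between the two lifted tori, and then reaches a contradiction via the length-versus-volume argument of \cite{BBI2}.

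A secondary issue: even granting a topological conjugacy, you would need to know that the conjugacy carries the leaf through the chosen intersection point $x$ to a dense stable leaf of the linear model, and that the $\Es$-subfoliation on $L_i$ is the entire stable foliation of $f|_{L_i}$; for a Franks \emph{semi}conjugacy (which is all one actually has) a dense leaf of $A$ pulls back to a dense union of leaves of $f|_{L_i}$, not to a single dense leaf. So the gap is not repairable by a small modification along the same lines.
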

To suggest why such intersections might be possible
in higher dimensions,
we give an example of an invariant partially hyperbolic subset of
a 3-manifold
which consists of two surfaces glued together, each of which
is tangent to
the center-stable direction of the splitting.
Moreover, the partially hyperbolic splitting on the subset
extends to a dominated splitting defined on the entire manifold.

\begin{thm} \label{thm:calzone}
    There is a diffeomorphism $f : \bbT^3 \to \bbT^3$
    with a sink $z  \in  \bbT^3$ and a global dominated splitting
    into three subbundles
    \begin{math}
        TM = \Eu \oplus \Ec \oplus \Es
    \end{math}
    such that if $B(z)$ denotes the basin of attraction of $z$,
    then the boundary of $B(z)$ is the union of two
    distinct intersecting tori tangent to $\Ec \oplus \Es$,
    and
    the splitting is partially hyperbolic on
    all of\, $\bbT^3 \sans B(z)$.
\end{thm}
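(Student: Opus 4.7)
The plan is to construct $f$ by performing a DA-type modification of an existing partially hyperbolic system that already possesses a compact center-stable torus. Start with a partially hyperbolic diffeomorphism $g : \bbT^3 \to \bbT^3$ admitting a compact normally repelling center-stable torus $T$, such as the example of \HHU. The strategy is to cut open $T$ along a simple closed curve and insert between the two resulting surfaces a ``bubble'' region containing a sink, in such a way that the global dominated splitting persists while partial hyperbolicity is destroyed only inside the bubble.

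Concretely, I would choose a smooth simple closed curve $\gam \subset T$ tangent to a sub-line field of $\Ec \oplus \Es$, and take a small tubular neighborhood $U$ of $\gam$ in $\bbT^3$. Inside $U$, I perturb $g$ in coordinates adapted to the splitting so that: (i) the torus $T$ is deformed into two distinct tori $T_1$ and $T_2$ meeting tangentially along $\gam$, each still tangent to $\Ecs$; (ii) a sink $z$ is created in the interior of the lens-shaped region bounded by $T_1 \cup T_2$, with $\Eu$ becoming contracting in this region; and (iii) outside of $U$, the new diffeomorphism $f$ agrees with $g$.

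To complete the proof I would verify four properties. The dominated splitting $\Eu \oplus \Ec \oplus \Es$ persists globally because the perturbation is $C^1$-small and dominated splittings are open in the $C^1$ topology. The partially hyperbolic structure continues to hold on the complement of the basin $B(z)$, since the only modification of the bundle dynamics occurs inside the lens, which is contained in $B(z)$. The two tori $T_1, T_2$ are compact, distinct, intersect along $\gam$, and are tangent to $\Ecs$ by construction. Finally, $B(z)$ coincides with the interior of the lens: every orbit inside contracts toward $z$ because all three subbundles contract there, while every orbit outside is pushed away from the lens by the uniform expansion of $\Eu$. Note that theorem \ref{thm:nocalzones} does not apply to $f$ precisely because $f$ fails to be partially hyperbolic at $z$, so having two intersecting center-stable tori is not forbidden.

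The main obstacle is designing the perturbation so that the two new tori are genuinely $C^1$-tangent to $\Ecs$ along the curve $\gam$. This requires $\Ecs$ to be $C^1$-integrable along $\gam$ in two distinct ways, which in turn forces the modification to be performed in local coordinates where $\Eu$ is a literal product direction and where the modification acts as a graph transformation within each $\Eu$-fiber. A perturbation profile that varies smoothly along $\gam$ and vanishes to high order at the endpoints of the $\Eu$-excursion should ensure that $T_1$ and $T_2$ are $C^1$ surfaces meeting tangentially along $\gam$.
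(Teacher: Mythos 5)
The proposal has a genuine gap. The claim that the dominated splitting persists because the perturbation is $C^1$-small cannot work. Since $g$ is partially hyperbolic, $\|Dg|_{\Eu}\|$ is uniformly bounded below by some $\lambda_u > 1$; to produce a sink $z$ you must make $\|Df|_{\Eu}\| < 1$ on a neighborhood of $z$, a modification of the derivative bounded away from zero in $C^1$. Indeed the whole content of the theorem is that partial hyperbolicity fails on $B(z)$, which cannot be achieved by a small perturbation. Consequently $C^1$-openness of dominated splittings gives you nothing, and the domination must be verified directly. The paper does this with an explicit cone-field computation (\cref{lemma:vfate}) on an explicit formula for $f$, not by perturbation.

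A second problem is your picture of the intersection $T_1 \cap T_2$ as a smooth simple closed curve $\gamma$. In the paper's construction, $f$ is built (in the dual $cu$-torus picture) as a skew product $f(x,s) = \big(g(x),(1-\alpha(x))\lambda s + \alpha(x)\beta(s)\big)$ on $\bbT^2 \times \bbR$ over a derived-from-Anosov map $g$ with a repelling fixed point $q$. The two tori are the closures of the unstable manifolds of the fixed points $(q,1)$ and $(q,-1)$, and their intersection is $K \times \{0\}$ where $K = \bbT^2 \setminus B(q)$ is the DA attractor: a compact, invariant set with fractal local structure, certainly not a curve. Having two distinct $C^1$ surfaces tangent to $\Ecs$ intersecting only along a smooth closed curve would force $\gamma$ to be $f$-periodic, tangent to a line subbundle of $\Ecs$, and compatible with the Anosov-like base dynamics, an extremely rigid configuration for which you give no mechanism. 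Note also that localizing the perturbation to a tubular neighborhood $U$ of $\gamma$ does not control the intersection set: the invariant $cs$-surfaces are global objects whose coincidence locus is determined by the global dynamics, not by where the map was modified.

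The actual proof is constructive rather than perturbative: build the explicit skew-product model, verify the dominated splitting by cones, establish $C^1$-regularity of the two tori via a variant of \cref{thm:fromzero}, and then embed the local model into $\bbT^3$ using the gluing technique of \cite{ham20XXconstructing}. Each of these steps is precisely where your perturbative framework leaves a gap.
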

This shows in particular that there is no local obstruction
to having an intersection.
We first construct the example which demonstrates \cref{thm:calzone} in \cref{sec:calzone}
and then prove \cref{thm:nocalzones} in \cref{sec:nocalzone}.

\medskip

The above results are stated for center-stable submanifolds.
By replacing $f$ with its inverse,
one may
state analogous results for compact center-\emph{unstable} submanifolds.
It is easier, in some cases, to prove a result in this alternate setting
and so we switch
back and forth
between the two viewpoints in the proofs below.

\bigskip{}

In related work,
theorems \ref{thm:finite} and \ref{thm:periodic} generalize results
given in \cite{rhrhu20XXcsfoln} and their proofs
are based on the techniques given there.
\Cref{thm:fromzero} is closely related to the main result of \cite{bc2016center},
which considers an arbitrary compact invariant set $K$ where
\begin{math}
    \Wu(x) \cap K = \{x\}
\end{math}
for all $x  \in  K$.
\Cref{thm:fromzero} could be proved as a consequence of this result.
However, the fact that $\Lam$ has the structure of a $C^0$ submanifold means that
we can give a direct, intuitive, and
comparatively simple proof of \cref{thm:fromzero} in the space of a few pages.
For this reason, we give a full self-contained proof in this paper.

\section{Finiteness} \label{sec:periodic} 

In this section,
assume $f:M \to M$ is partially hyperbolic.
To prove theorems \ref{thm:finite} and \ref{thm:periodic},
we may freely
replace $f$ by an iterate and therefore also assume
that $\|Df(v)\| > 2\|v\|$ for all non-zero $v \in \Eu$.


Let $d_H$ denote Hausdorff distance.
Equipped with $d_H$, the space
of compact subsets of $M$ is a compact metric space.
If $x$ and $y$ are points on the same unstable leaf, let $d_u(x,y)$
denote the distance between them as measured along the leaf.
If $x$ and $y$ are on distinct unstable leaves,
then $d_u(x,y) = +\infty$.
Similar to the definition of Hausdorff distance,
for subsets $X,Y \subset M$ define
\[
    \dist_u(x, Y) = \inf_{y \in Y} d_u(x, y)
\]
and
\[
    \dist_u(X, Y) =
        \max \big\{
            \sup_{x \in X} \dist_u(x,Y),\,
            \sup_{y \in Y} \dist_u(y,X) \big\}.
\]
In what follows, we write $cs$-submanifold as shorthand for
a center-stable submanifold.
Using the transversality of $\Ecs$ and $\Eu$,
one may prove the following

\begin{lemma} \label{lemma:rhalf}
    There is $r > 0$ such that 
    if $S$ and $T$ are \ccsss{}
    and $d_H(S,T) < r$,
    then $\dist_u(S,T) < \tfrac{1}{2}$.\qed
\end{lemma}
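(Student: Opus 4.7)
The plan is to argue by contradiction, exploiting the uniform transversality of $\Ecs$ and $\Eu$. Suppose no such $r$ exists; then for each $n$ there are \ccsss{} $S_n$ and $T_n$ with $d_H(S_n,T_n) < 1/n$ but $\dist_u(S_n,T_n) \ge \tfrac{1}{2}$. After swapping the roles of $S$ and $T$ if needed, pick $x_n \in S_n$ with $\dist_u(x_n, T_n) \ge \tfrac{1}{2}$, pass to a subsequence so that $x_n \to x_\infty \in M$, and choose $y_n \in T_n$ with $d(x_n, y_n) \to 0$. The strategy is to produce, for all large $n$, a point $z_n \in T_n \cap \Wu(x_n)$ at $\Wu$-distance less than $\tfrac{1}{2}$ from $x_n$, contradicting the choice of $x_n$.

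To produce $z_n$, I would set up a uniform local product chart around $x_\infty$. By continuity of the subbundles $\Ecs$ and $\Eu$ on the compact manifold $M$ and by their transversality, there is a $C^1$ chart $\phi \colon V \to B^{cs} \ti B^u$ on a neighbourhood $V$ of $x_\infty$ in which $\Ecs$ is uniformly ``near-horizontal'' and $\Eu$ is uniformly ``near-vertical,'' and in which plaques of $\Wu$ of length at least, say, $\tfrac14$ appear as vertical slices $\{a\} \ti B^u$. In such a chart any $C^1$ submanifold tangent to $\Ecs$ that meets $V$ is, near each of its points, a $C^1$ graph over a piece of $B^{cs}$ of uniformly bounded slope. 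The main technical step is to upgrade this to a uniformly sized plaque: there is $\rho > 0$, independent of the submanifold, such that for any $cs$-submanifold $T$ and any $y \in T\cap V$ sufficiently close to $x_\infty$, the connected component of $T \cap V$ containing $y$ is a graph over the $\rho$-ball in $B^{cs}$ around $\pi_{cs}(y)$. I expect this to be the hard part: because tangent planes of $T$ are uniformly close to horizontal, any attempt of the graph to leave $V$ must be through the top or bottom of $B^u$ and requires a definite horizontal displacement, so continuing the graph by the implicit function theorem (or a one-sided graph transform) terminates only after a horizontal distance that can be bounded below uniformly in $T$. Choosing $V$ small makes $\rho$ uniform.

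Once this uniform plaque statement is available, the contradiction is immediate. For $n$ large, $\pi_{cs}(y_n)$ lies within $\rho/2$ of $\pi_{cs}(x_\infty)$ and $\pi_{cs}(x_n)$ lies within $\rho/2$ of $\pi_{cs}(y_n)$, so the $cs$-graph of $T_n$ through $y_n$ covers $\pi_{cs}(x_n)$. The resulting graph point $z_n \in T_n$ sits on the vertical slice $\{\pi_{cs}(x_n)\} \ti B^u$, hence on $\Wu(x_n)$. Near-horizontality of the graph, together with $y_n \to x_\infty$, forces $z_n \to x_\infty$, so in particular the arclength along $\Wu(x_n)$ from $x_n$ to $z_n$ tends to zero. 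Thus $d_u(x_n, z_n) < \tfrac{1}{2}$ for all large $n$, contradicting $\dist_u(x_n, T_n) \ge \tfrac{1}{2}$. The same argument with the roles of $S$ and $T$ reversed gives the other half of the $\dist_u$ bound, so a suitable $r$ exists.
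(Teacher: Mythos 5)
The paper in fact omits the proof of this lemma (it is stated with ``\,\qed\,'' and introduced only by the remark that it follows from transversality of $\Ecs$ and $\Eu$), so there is no authoritative argument to compare against. Your proposal is a correct fleshing-out of exactly the hint the paper gives, and the argument works. The essential content --- a chart in which local $\Wu$-leaves are vertical fibres, $\Ecs$ is uniformly near-horizontal, and hence any $cs$-plaque is a graph of uniformly small slope over a $cs$-ball of uniformly bounded-below radius --- is the standard uniform local product structure for a continuous transverse pair of invariant bundles, and it does give the uniform bound you need. The one step you single out as ``the hard part'' (the uniform plaque size $\rho$) is indeed the crux, but it is routine: the slope bound comes from continuity of $\Ecs$ over compact $M$, and a graph of slope $<\epsilon$ started at height within $h/2$ of the centre of a box of $u$-height $2h$ cannot exit through top or bottom until its $cs$-displacement exceeds $h/(2\epsilon)$, which gives $\rho$ uniform in the submanifold.

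Two small remarks. First, $\dist_u(\cdot,T)$ need not be continuous (it jumps to $+\infty$ off the saturate $\Wu(T)$), so you should choose $x_n$ with $\dist_u(x_n,T_n) > \tfrac12 - \tfrac1n$ rather than assuming the supremum is attained; this does not affect the argument. Second, the contradiction--compactness framing is slightly more than is needed: once you have the uniform plaque lemma, you can take a finite atlas of such local product charts with a Lebesgue number $2r$ and a uniform slope bound, and then directly show that $d_H(S,T)<r$ forces, for every $x\in S$, a point $z\in T\cap\Wu_{\mathrm{loc}}(x)$ with $d_u(x,z)$ bounded by a constant times $r$, and symmetrically. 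That direct version also produces an explicit $r$. Both are sound; yours is correct as written modulo the cosmetic point above.
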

In this section,
call a compact subset $X \subset M$ ``well positioned'' if $d_u(x,y) > 5$
for all distinct $x,y \in X$.

\begin{lemma} \label{lemma:wellpos}
    Let $T$ be a \ccss{}.  Then, there is an integer $n_0$ such that
    $f^n(T)$ is well positioned for all $n > n_0$.
\end{lemma}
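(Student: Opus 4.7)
The plan is to argue by contradiction, exploiting the uniform expansion $\|Df v\| > 2\|v\|$ for $v \in \Eu$ that we have arranged after passing to an iterate. Suppose no such $n_0$ exists. Then there is a sequence $n_k \to \infty$ and distinct points $x_k, y_k \in f^{n_k}(T)$ with $d_u(x_k, y_k) \le 5$. Set $x'_k = f^{-n_k}(x_k)$ and $y'_k = f^{-n_k}(y_k)$, so that $x'_k \ne y'_k$ lie in $T$ on a common unstable leaf, and the uniform contraction of $f^{-1}$ along unstable leaves gives
\[
    d_u(x'_k, y'_k) \;\le\; 5 \cdot 2^{-n_k} \;\longrightarrow\; 0.
\]

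The preparatory step is to show that $T$ has a uniform ``local transversality radius'' with the unstable foliation: there exists $\ep > 0$ such that for every $x \in T$, the intersection of $T$ with the unstable plaque of radius $\ep$ about $x$ consists only of $x$. This follows from compactness of $T$ together with the fact that $T$ is $C^1$ and tangent to $\Ecs$, which is transverse to $\Eu$: at each $x \in T$ transversality together with continuity of $\Eu$ gives a neighborhood $U_x$ and an $\ep_x > 0$ such that unstable plaques of radius $\ep_x$ based in $T \cap U_x$ meet $T$ only at their base points, and a finite subcover plus continuous dependence of unstable plaques on the base point yields a uniform $\ep$.

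With this in hand, pass to a subsequence so that $x'_k \to x_\infty \in T$. Since $d_u(x'_k, y'_k) \to 0$, the point $y'_k$ lies on the unstable plaque of $x'_k$ of radius less than $\ep$ once $k$ is sufficiently large. But then the uniform transversality forces $y'_k = x'_k$, contradicting their distinctness.

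The only substantive obstacle is the uniform local-transversality statement; everything else is the contraction estimate and a compactness argument. Once that uniform $\ep$ is established, pulling the putative short-unstable-distance pairs back by $f^{-n_k}$ compresses them below $\ep$ and produces the required contradiction.
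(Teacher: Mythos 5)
Your proof is correct and uses the same two ingredients as the paper: a uniform transversality radius $\ep>0$ for $T$ coming from compactness and transversality of $\Ecs$ with $\Eu$, and the uniform expansion factor $2$ per iterate on unstable leaves. The paper argues directly (choose $n_0$ with $2^{n_0}\ep>5$) where you argue by contradiction, but these are the same proof; the only superfluous step in yours is passing to a subsequence $x'_k \to x_\infty$, which is never used since $d_u(x'_k,y'_k)<\ep$ already gives the contradiction.
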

\begin{proof}
    As $\Ecs$ is transverse to $\Eu$, there is $\ep > 0$
    such that
    $d_u(x,y) > \ep$
    for all distinct $x \in T$ and $y \in \Ws(x) \cap T$.
    Then take $n_0$ such that $2^{n_0} \ep > 5$ and use the above
    assumption on $\Eu$.
\end{proof}

\begin{lemma} \label{lemma:fibercontract}
    If $S$ is a well-positioned \ccss{} and
    \begin{math}
        \dist_u(S,f^k(S)) \linebreak[3] < \tfrac{1}{2}
    \end{math}
    for some $k  \ge  1$,
    then
    there is a unique well-positioned periodic $C^0$ submanifold $\Lam$
    such that $\dist_u(S, \Lam) < 1$.
\end{lemma}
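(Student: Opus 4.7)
The plan is to set up a Banach fixed point argument via a backward graph transform on continuous sections over $S$ along the unstable foliation. First, use the transversality of $\Ecs$ and $\Eu$ on the well-positioned $cs$-submanifold $S$ to build a uniform unstable tubular neighborhood: for each $x \in S$, let $I_x$ denote the closed unstable arc through $x$ of unstable radius $1$. The well-positioning bound $d_u > 5$ makes the arcs $\{I_x\}_{x \in S}$ pairwise disjoint and gives a continuous unstable projection on a neighborhood of $S$. Let $\mathcal{X}$ be the space of continuous sections $\sigma : S \to M$ with $\sigma(x) \in I_x$, equipped with $d(\sigma, \tau) = \sup_{x \in S} d_u(\sigma(x), \tau(x))$; this is a complete metric space, and each $\sigma$ determines a compact $C^0$ submanifold $\Sigma_\sigma = \sigma(S)$ homeomorphic to $S$ with $\dist_u(S, \Sigma_\sigma) < 1$.

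Next I would define a graph transform $T : \mathcal{X} \to \mathcal{X}$ representing $\Sigma \mapsto f^{-k}(\Sigma)$. The hypothesis $\dist_u(S, f^k(S)) < 1/2$, combined with the well-positioning of $f^k(S)$ (which is inherited from $S$ since $f^k$ expands $\Eu$), produces a continuous holonomy $h : S \to S$ sending $y$ to the unique $x' \in S$ with $d_u(f^k(y), x') < 1/2$. Then set $T(\sigma)(y) = f^{-k}(\sigma(h(y)))$. Since $\|Df^{-k}|_{\Eu}\| \le 2^{-k}$ (from the section's standing assumption $\|Df v\| > 2\|v\|$ on $\Eu$), a direct estimate gives $d_u(y, T(\sigma)(y)) < 2^{-k}(\tfrac{1}{2} + 1) < 1$, so $T$ takes $\mathcal{X}$ into itself, and $d(T(\sigma_1), T(\sigma_2)) \le 2^{-k} d(\sigma_1, \sigma_2)$, so $T$ is a strict contraction. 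The Banach fixed point theorem then yields a unique $\sigma_* \in \mathcal{X}$, and $\Lambda = \Sigma_{\sigma_*}$ is a compact $C^0$ submanifold with $f^k(\Lambda) = \Lambda$ and $\dist_u(S, \Lambda) < 1$.

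Finally, I would check well-positioning and uniqueness of $\Lambda$. If distinct $x, y \in \Lambda$ lay on a common unstable leaf, then $f^{nk}(x), f^{nk}(y) \in \Lambda$ on the same leaf with $d_u(f^{nk}(x), f^{nk}(y)) \ge 2^{nk} d_u(x, y) \to \infty$, contradicting compactness of $\Lambda$; hence distinct points of $\Lambda$ lie on distinct unstable leaves and so $d_u = +\infty$ between them, giving well-positioning trivially. For uniqueness, let $\Lambda'$ be any well-positioned periodic $C^0$ submanifold with $\dist_u(S, \Lambda') < 1$. The same expansion-compactness argument places distinct points of $\Lambda'$ on distinct unstable leaves, producing a section $\sigma' \in \mathcal{X}$ with $\Sigma_{\sigma'} = \Lambda'$, and the triangle estimate $\dist_u(S, f^{-k}(\Lambda')) \le 2^{-k-1} + 2^{-k} \dist_u(S, \Lambda')$ shows the full $f^{-k}$-orbit of $\Lambda'$ stays in $\mathcal{X}$. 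Since this orbit is eventually periodic (as $\Lambda'$ itself is periodic) while $T^n(\sigma') \to \sigma_*$, all terms in the periodic tail must equal $\sigma_*$, and injectivity of the contraction $T$ forces $\sigma' = \sigma_*$, hence $\Lambda' = \Lambda$. The main technical hurdle is the graph transform setup: verifying that $T$ genuinely preserves $\mathcal{X}$ and realizes the $2^{-k}$ contraction constant requires careful bookkeeping of the unstable-distance estimates, but once this is in place, Banach together with the compactness-expansion argument finishes the proof.
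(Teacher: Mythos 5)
Your proof takes essentially the same route as the paper's: it sets up a fiber bundle over $S$ with unstable arcs as fibers and applies the Banach fixed point theorem to the induced graph transform. The paper instead cites the HPS fiber contraction theorem, but that theorem is proved by exactly the argument you give, so the two are the same up to presentation; the only cosmetic difference is that the paper parametrizes the bundle over $f^k(S)$ while you parametrize over $S$.

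There is, however, one genuine error, in the step establishing that $\Lambda$ is well positioned. You argue that if distinct $x, y \in \Lambda$ lie on a common unstable leaf, then $d_u(f^{nk}(x), f^{nk}(y)) \ge 2^{nk} d_u(x,y) \to \infty$ with both iterates staying in the compact set $\Lambda$, ``contradicting compactness.'' This is not a contradiction: the intrinsic leaf distance $d_u$ between points of a compact set is not controlled by compactness. (Consider two points on a dense unstable leaf of an Anosov diffeomorphism of $\bbT^2$; the whole manifold is compact, yet forward iteration makes $d_u$ between the two points grow without bound.) The correct direction is the contracting one: if $0 < d_u(x,y) < \infty$, then $d_u(f^{-nk}(x), f^{-nk}(y)) \le 2^{-nk}\, d_u(x,y) \to 0$, with both iterates remaining in $\Lambda = \sigma_*(S)$. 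Once $d_u(f^{-nk}(x), f^{-nk}(y)) < 3$, the base points $s_1, s_2 \in S$ with $f^{-nk}(x) \in I_{s_1}$ and $f^{-nk}(y) \in I_{s_2}$ satisfy $d_u(s_1,s_2) < 3 + 2 = 5$, so $s_1 = s_2$ by well-positioning of $S$, hence $f^{-nk}(x) = f^{-nk}(y)$ since $\sigma_*$ is a section, and therefore $x = y$, a contradiction. With this replacement the rest of your argument goes through (and note that for $\Lambda'$ in the uniqueness step you do not even need this: the hypothesis that $\Lambda'$ is well positioned already gives $d_u > 5$, which is all that is used to realize $\Lambda'$ as a section).
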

\begin{proof}
    For $x \in f^k(S)$, define $h(x)$ as the unique point in $f^k(S)$
    such that
    \[    
        d_u(f^{-k}(x),h(x)) < \tfrac{1}{2}.
    \]
    Existence follows from $\dist_u(S, f^k(S)) < \tfrac{1}{2}$
    and uniqueness from the fact that $f^k(S)$ is well positioned.
    By the same reasoning, there an inverse map $h \inv$
    and so
    $h: f^k(S) \to f^k(S)$ is a homeomorphism.
    For $x \in M$, define $\Wu_1(x) = \{ y \in M: \dist_u(x,y) < 1 \}.$
    One can show that
    $f^{-k}(\Wu_1(x)) \subset \Wu_1(h(x))$
    for all $x \in f^{-k}(S)$.
    In other words, $f^{-k}$ restricted to a neighbourhood of $f^k(S)$
    is a fiber contraction of a $C^0$ fiber bundle.
    By the fiber contraction theorem \cite[Theorem 3.1]{HPS},
    there is an $f^{-k}$\,invariant $C^0$ submanifold $\Lam$
    in this neighbourhood.  Applying $f^{-k}$, one sees that
    $\dist_u(S, \Lam) < 2^{-k} < 1$.

    Suppose $\Lam'$ is a well-positioned periodic submanifold
    with $\dist_u(S, \Lam') < 1$.
    Then $\dist_u(\Lam, \Lam') < 2$
    and $\dist_u(f^{-n}(\Lam), f^{-n}(\Lam'))$ tends to zero as $n \to \infty$.
    This shows that $\Lam = \Lam'$.
\end{proof}
\begin{lemma} \label{lemma:closelam}
    Let $S$ be a \ccss{}.
    For any $\ep > 0$, there is
    a well-positioned periodic $C^0$ submanifold $\Lam_\ep$
    such that $\dist_u(S, \Lam_\ep) < \ep$.
\end{lemma}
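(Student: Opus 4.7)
The plan is to produce $\Lam_\ep$ by first iterating $S$ forward until it becomes well-positioned, then using compactness of the space of closed subsets under $d_H$ to find two iterates close enough to invoke \cref{lemma:fibercontract}, and finally transporting the resulting periodic submanifold back to the vicinity of $S$.

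Fix $\ep > 0$. By \cref{lemma:wellpos}, there is $n_0$ such that $f^n(S)$ is well-positioned for all $n > n_0$. Since the space of compact subsets of $M$ equipped with $d_H$ is compact, $\{f^n(S)\}_{n > n_0}$ has a $d_H$-convergent subsequence, and by \cref{lemma:rhalf} we may choose integers $n < m$ with both $n$ and $k := m - n$ as large as we please such that $\dist_u(f^n(S), f^m(S)) < \tfrac{1}{2}$. Writing $T := f^n(S)$, \cref{lemma:fibercontract}---or, more precisely, the sharper bound $\dist_u(T, \Lam) < 2^{-k}$ shown in its proof---furnishes a unique well-positioned periodic $C^0$ submanifold $\Lam$ with $\dist_u(T, \Lam) < 2^{-k}$.

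To finish, I want to pull $\Lam$ back near $S$ without destroying the well-positioned property. The key observation is that since $f^k(\Lam) = \Lam$, there is $l \in \{0, 1, \dots, k-1\}$ with $f^{-n}(\Lam) = f^l(\Lam)$. Because $f$ expands unstable distances by a factor greater than $2$, forward iteration preserves the well-positioned property, and hence $\Lam_\ep := f^l(\Lam)$ is both well-positioned and periodic. Since $f^{-n}$ contracts unstable distances by at least $2^{-n}$,
\[
\dist_u(S, \Lam_\ep) = \dist_u\bigl(f^{-n}(T), f^{-n}(\Lam)\bigr) \le 2^{-n} \dist_u(T, \Lam) < 2^{-n-k},
\]
so taking $n + k$ large enough yields $\dist_u(S, \Lam_\ep) < \ep$. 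The main technical point is this last maneuver: a naive pullback of $\Lam$ would generically fail to be well-positioned, and it is the periodicity of $\Lam$ that allows the pullback to be replaced by an equivalent forward iterate, for which well-positioned-ness is automatic.
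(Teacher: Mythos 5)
Your proof is correct and follows essentially the same route as the paper's: iterate $S$ forward, use $d_H$-compactness and \cref{lemma:rhalf} to find near-recurrent iterates, apply \cref{lemma:fibercontract} to produce a periodic $\Lam$, and pull it back by $f^{-n}$. You go one small step further than the paper in explicitly checking that $\Lam_\ep = f^{-n}(\Lam)$ is still well-positioned, by observing via periodicity that $f^{-n}(\Lam)$ equals a forward iterate $f^l(\Lam)$ with $l \ge 0$; the paper leaves this verification unstated.
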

\begin{proof}
    Let $n_0$ be such that $2^{-n} < \ep$
    and $f^n(S)$ is well positioned for all $n > n_0$.
    As Hausdorff distance defines a compact metric space,
    there are $m > n > n_0$ such that $d_H(f^n(S), f^m(S)) < r$.
    By \cref{lemma:rhalf} and \cref{lemma:fibercontract},
    there is $\Lam$ such that $\dist_u(f^n(S), \Lam) < 1$.
    Then
    $\dist_u(S, f^{-n}(\Lam)) < 2^{-n}$,
    so take
    $\Lam_\ep = f^{-n}(\Lam)$.
\end{proof}
\begin{proof}
    [Proof of \cref{thm:periodic}]
    Let $S$ be a \ccss{}.
    By \cref{lemma:closelam},
    there is a sequence of periodic submanifolds $\{\Lam_n\}$ such that
    $\dist_u(S, \Lam_n)$ tends to zero.
    The uniqueness in \cref{lemma:fibercontract} implies
    that $\{\Lam_n\}$ is eventually constant.
    Therefore, $S = \Lam_n$ for all large $n$.
\end{proof}
\begin{proof}
    [Proof of \cref{thm:finite}]
    All \ccsss{} are periodic.
    By \cref{lemma:wellpos},
    they are all well positioned.
    If $S  \ne  T$ are two of these submanifolds,
    then lemmas \ref{lemma:rhalf} and \ref{lemma:fibercontract}
    imply that
    $d_H(S,T) > r$.
    A compactness argument using Hausdorff distance
    implies that there are only finitely many.
\end{proof}

\section{Regularity of submanifolds} \label{sec:fromzero} 

This section proves theorems \ref{thm:fromzero} and \ref{thm:toperiodic}.
Using the results of the previous section,
the latter follows easily from the former.

%

\begin{proof}
    [Proof of \cref{thm:toperiodic}]
    Note that $\dist_u(S, f^k(S)) < \infty$
    and therefore
    \[    
        \dist_u(f^{-n}(S), f^{k-n}(S)) < \tfrac{1}{2}
    \]
    for sufficiently large $n$.
    The condition $\Wu(x) \cap S = \{x\}$ implies that
    $f^{-n}(S)$ is well positioned.
    \Cref{lemma:fibercontract} then shows that 
    that there is a periodic $C^0$ submanifold
    $\Lam$ which satisfies the hypotheses of \cref{thm:fromzero}.
\end{proof}
The above proof further shows that the submanifolds
in \cref{thm:toperiodic} satisfy $\dist_u(S$, $\Lam) < \infty$.

\medskip{}

One direction of \cref{thm:fromzero} readily follows from
results in the last section.
To prove the other direction, it will be easier to exchange the roles of $\Eu$
and $\Es$.
Therefore, we assume $f:M \to M$ is partially hyperbolic
and $\Lam$ is a periodic $C^0$ submanifold such that
\begin{math}
    \Ws(x) \cap \Lam = \{x\}
\end{math}
for all $x \in \Lam$. Our goal is then to show that $\Lam$ is a $C^1$
submanifold tangent to $\Ecu$.
To prove this, we may freely replace $f$ by an iterate.
In particular, assume $f(\Lam) = \Lam$.
Also assume that
associated to the partially hyperbolic splitting $TM = \Es \oplus \Ecu$
is a continuous function $\lam: M \to (0,\tfrac{1}{2})$
such that
\begin{math}
    \| Df v^s \| < \lam(x) < 2 \lam(x) < \| Df \vcu \|
\end{math}
for all $x \in M$ and unit vectors $\vs \in \Es(x)$ and $\vcu \in \Ecu(x)$.

Let $\Cone \subset TM$ be a cone family associated to the dominated splitting.
That is, for every $x \in M$,
$\Cone(x) = \Cone \cap T_x M$ is a closed convex
subset of $TM$ such that $\Es(x) \subset \Cone(x)$,
$\Ecu(x) \cap \Cone(x) = 0$, and $\Cone(x)$ depends continuously on $x$.
Define the dual cone family
$\Cstar$
as the closure of $TM \sans \Cone$.
The properties of the splitting imply that
\[
    \bigcap_{n  \ge  0} Df^{-n}(\Cone) = \Es
    \qandq
    \bigcap_{n  \ge  0} Df^n(\Cstar) = \Ecu,
\]
Replacing $\Cone$ by some $Df^{-n}(\Cone)$, $f$ by a large iterate $f^m$,
and the function $\lam$ by
\[
    x \mapsto \lam(f^{m-1}(x)) \, \cdots \, \lam(f(x)) \, \lam(x),
\]
assume for any $x \in M$ and non-zero vector $v \in T_x M$ that
\begin{enumerate}
    \item if $v \in \Cone$, then $\| Df v \| < \lam(x) \|v\|$;

    \item if $v  \in  \Cstar$, then $Df(v)  \in  \Cstar$; and

    \item if $v  \in  Df(\Cstar)$, then $\| Df v \| > 2 \lam(x) \|v\|.$
\end{enumerate}
Let $\exp_x : T_x M \to M$ be the exponential map.
Up to rescaling the Riemannian metric on $M$,
assume that if $d(x,y)<1$,
then there is a unique vector $v \in T_x M$ with $\|v\|<1$
such that $y = \exp_x(v)$.
Define a continuous map
\[
    F: \{v \in TM : \|v\| < 1\} \to TM
\]
by requiring that $\exp_{f(x)} (F(v)) = f(\exp_x(v))$
for all $x \in M$ and $v \in T_x M$ with $\|v\|<1$.

\begin{lemma} \label{lemma:Fdelta}
    There is $0 < \delta < 1$ such that
    for any $x \in M$ and $v \in T_x M$ with $\|v\| < \delta$
    \begin{enumerate}
        \item if $v \in \Cone$, then $\| F(v) \| < \lam(x) \|v\|$;

        \item if $v  \in  \Cstar$, then $F(v)  \in  \Cstar$; and

        \item if $v  \in  F(\Cstar)$, then $\| F(v) \| > 2 \lam(x) \|v\|.$
    \end{enumerate}  \end{lemma}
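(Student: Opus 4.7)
The approach is to treat $F$ as a continuous perturbation of the linear map $Df$ along the zero section of $TM$ and to transfer each of the three hypotheses on $Df$ to a uniform neighborhood of that section. Because $f$ is a diffeomorphism and $\exp$ is smooth, $F$ is $C^1$ and satisfies $F(0_x)=0_{f(x)}$ with derivative along the fiber at the zero vector equal to $Df_x$. Writing $F(v)=Df(v)+r(x,v)$, compactness of $M$ supplies a modulus $\omega$ with $\|r(x,v)\|\le\omega(\|v\|)\|v\|$ uniformly in $x$ and $\omega(s)\to 0$ as $s\to 0$. The plan is then to choose $\delta$ so that $\omega(\delta)$ is smaller than fixed positive gaps extracted from the strict inequalities in conditions (1)--(3).

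For (1), the function $\mu(x):=\sup\{\|Df v\|:v\in\Cone(x),\,\|v\|=1\}$ is continuous on the compact manifold $M$ and strictly below $\lam$, so $\eta:=\inf_x(\lam(x)-\mu(x))>0$, and any $\delta$ with $\omega(\delta)<\eta$ gives $\|F(v)\|<\lam(x)\|v\|$ on $\Cone$. Property (3) follows by the same recipe applied to an infimum that encodes the expansion on $Df(\Cstar)$, using the observation that a small vector $v=F(w)\in F(\Cstar)$ is itself close to $Df(w)\in Df(\Cstar)$, so that both $\|v\|\approx\|Df w\|$ and $\|F(v)\|\approx\|Df(Df w)\|$, and the strict expansion estimate for $Df$ perturbs to one for $F$.

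Property (2) is subtler, since $\Cstar$ is closed but need not have open interior in every direction, so $F(v)$ being near $Df(v)\in\Cstar$ does not obviously imply $F(v)\in\Cstar$. This is precisely why the paragraph preceding the lemma replaces $\Cone$ by $Df^{-n}(\Cone)$ for large $n$: after this modification, $Df$ carries $\Cstar(x)\setminus\{0\}$ into the interior of $\Cstar(f(x))$, with a quantitative angular gap that is uniform in $x$ by compactness. Shrinking $\delta$ so that the perturbation $r(x,v)$ cannot bridge this gap yields $F(v)\in\Cstar$, and taking $\delta$ to be the minimum of the radii produced by the three clauses finishes the argument. The main obstacle is exactly the extraction of this uniform strict invariance in (2): properties (1) and (3) are essentially routine consequences of compactness and $C^1$ proximity, while (2) genuinely relies on the iterate/cone replacement to convert the \emph{closed} invariance of $Df$ into an open condition that tolerates perturbation.
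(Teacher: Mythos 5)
Your proposal matches the paper's approach: the paper's proof is a two-sentence argument that $\tfrac{1}{\|v\|}\|F(v) - Df(v)\|$ tends uniformly to zero as $v\to 0$ and that the lemma then follows from the corresponding properties of $Df$. Your fuller treatment, in particular the observation that item (2) requires the strict (open) invariance $Df(\Cstar\sans 0)\subset\interior\Cstar$ supplied by the cone-and-iterate replacement rather than the merely closed invariance as literally stated, is a correct and worthwhile elaboration of the detail that the paper's terse proof leaves to the reader.
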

\begin{proof}
    The properties of the exponential map imply that
    \begin{math}
        \tfrac{1}{\|v\|} \|F(v) - Df(v)\|
    \end{math}
    tends uniformly to zero as $v \to 0$.
    The lemma may then be proved from the corresponding properties of
    $Df$.
\end{proof}
Later on, we will also need the following fact.

\begin{lemma} \label{lemma:coneback}
    For any $n > 0$,
    there is $r_n > 0$
    such that if $w  \in  \Cstar$ and $\|w\| < r_n$,
    then
    $F^{n+1}(w)  \in  Df^n(\Cstar)$.
\end{lemma}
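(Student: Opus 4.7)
The plan is to exploit the approximation $F(v) - Df(v) = o(\|v\|)$, already implicit in the proof of \cref{lemma:Fdelta}, and to propagate it through $n+1$ iterates. This will show that $F^{n+1}(w)$ differs from $Df^{n+1}(w)$ by an angular error that vanishes as $\|w\| \to 0$. Since $Df^{n+1}(\Cstar)$ sits strictly inside $Df^n(\Cstar)$ with a uniform angular gap, the perturbed vector will still lie in $Df^n(\Cstar)$ once $\|w\|$ is sufficiently small.

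To make this precise, I would set $\eta(s) = \sup_{0 < \|v\| \le s} \|F(v) - Df(v)\|/\|v\|$ and use the fact that $\eta(s) \to 0$, together with the decomposition
\[
    F^k(v) - Df^k(v) = Df\bigl(F^{k-1}(v) - Df^{k-1}(v)\bigr) + (F - Df)\bigl(F^{k-1}(v)\bigr),
\]
combined with a crude bound $\|F(v)\|, \|Df(v)\| \le K\|v\|$ valid for small $v$ (which holds since $F(0) = 0$ and $F$ has derivative $Df$ at the zero section). A straightforward induction then yields $\|F^k(v) - Df^k(v)\| \le \eta_k(\|v\|)\|v\|$ for each $k \le n+1$, where $\eta_k(s) \to 0$. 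Since $M$ is compact and $Df$ is invertible, $\|Df^k(v)\| \ge c_k \|v\|$ for some $c_k > 0$, so the angle between $F^k(w)$ and $Df^k(w)$ tends to $0$ as $\|w\| \to 0$, uniformly over the direction of $w \in \Cstar$.

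Finally, I would observe that $Df^{n+1}(\Cstar)$ lies at a uniform positive angular distance from the boundary of $Df^n(\Cstar)$. This comes from the strict cone contraction $Df(\Cstar) \subset \interior \Cstar$ in each fiber, together with compactness of $M$ and of the unit sphere in $\Cstar(x)$. Picking $r_n$ small enough that the angular error produced above is strictly smaller than this gap whenever $\|w\| < r_n$, one concludes $F^{n+1}(w) \in Df^n(\Cstar)$. The main obstacle I expect is making the uniform strict containment of cones rigorous, with a definite angular gap that does not depend on the base point; this should follow from the standard setup of cone families for a dominated splitting, but a careful compactness argument is needed to rule out degeneration of the gap as $x$ varies over $M$.
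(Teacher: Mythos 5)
Your proof is correct and follows the same approach as the paper's: show that for small $w$ the image $F^{n+1}(w)$ stays angularly close to $Df^{n+1}(w)$, and then use the uniform angular gap between $Df^{n+1}(\Cstar)$ and $Df^n(\Cone)$ to conclude. The paper records only a two-sentence outline, treating the angular-error estimate as a routine consequence of $F(v) - Df(v) = o(\|v\|)$; your inductive decomposition of $F^k - Df^k$ simply makes that step explicit.
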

\begin{proof}
    There is a lower bound on the angle between any
    non-zero vectors $u  \in  Df^n(\Cone)$ and $v  \in  Df^{n+1}(\Cstar)$.
    Take $r_n$ small enough that if $\|w\| < r_n$,
    then the angle between $F^{n+1}(w)$ and $Df^{n+1}(w)$
    is smaller than this bound.
\end{proof}

\begin{lemma} \label{lemma:Fstable}
    If $v \in T_x M$ is such that $\|F^n(v)\| < \delta$ and $F^n(v) \in \Cone$
    for all $n  \ge  0$, then $\exp_x(v)$ lies on the stable leaf through $x$.
\end{lemma}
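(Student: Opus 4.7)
The plan is to unwind the definition of $F$ and reduce the statement to an iterative application of \cref{lemma:Fdelta}(1). Set $y = \exp_x(v)$. From the defining relation $\exp_{f(x)}(F(v)) = f(\exp_x(v))$, a straightforward induction on $n$ gives $f^n(y) = \exp_{f^n(x)}(F^n(v))$ for every $n \ge 0$; the hypothesis $\|F^n(v)\| < \delta < 1$ keeps each iterate inside the chart where $\exp$ is a diffeomorphism onto its image, so this identity faithfully records the forward orbit of $y$, and in particular $d(f^n(x), f^n(y)) \le \|F^n(v)\|$ for every $n$.

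Next, I would apply \cref{lemma:Fdelta}(1) at every step. Since $F^n(v) \in \Cone$ and $\|F^n(v)\| < \delta$ by hypothesis, the lemma gives $\|F^{n+1}(v)\| < \lam(f^n(x))\,\|F^n(v)\|$; iterating this and using the fact that $\lam$ takes values in $(0,\tfrac{1}{2})$ yields $\|F^n(v)\| < 2^{-n} \|v\|$. Hence $d(f^n(x), f^n(y))$ decays to zero exponentially, at a rate governed by the stable constant $\lam$.

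Finally, the splitting was arranged so that the minimum expansion along $\Ecu$ is $2\lam$, strictly larger than the stable contraction rate $\lam$. Therefore any point whose forward orbit contracts onto that of $x$ at rate $\lam$ must lie on the stable leaf through $x$, by standard stable manifold theory: a nonzero $\Ecu$-component of the displacement from $x$ would otherwise grow exponentially and contradict the decay just established. This yields $y \in \Ws(x)$, as desired. I do not anticipate any real obstacle; the content of the lemma is essentially the translation of the cone-contraction bound of \cref{lemma:Fdelta}(1) back into honest distances on $M$ via the exponential map.
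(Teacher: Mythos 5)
Your first half is essentially what the paper does: unwind the conjugacy $\exp_{f(x)}F = f\exp_x$, iterate \cref{lemma:Fdelta}(1), and get $\|F^n(v)\| < 2^{-n}\delta$, hence $d(f^n(x),f^n(y)) \to 0$ exponentially. That is fine. The gap is in the concluding sentence, where you pass from this decay rate to $y \in \Ws(x)$ by invoking ``standard stable manifold theory.'' The phrase ``a nonzero $\Ecu$-component of the displacement from $x$'' has no direct meaning on the manifold (there is no canonical way to project $y-x$ onto $\Ecu(x)$ when $x,y \in M$), and making it meaningful is exactly the missing work. Note also that, unlike the Anosov case, in the partially hyperbolic setting ``$d(f^n x, f^n y)\to 0$'' does \emph{not} characterize the stable leaf, since center directions may also contract along an orbit; you need the \emph{rate} comparison ($\lam$ versus $2\lam$), and turning that comparison into a statement about points of $M$ requires a concrete construction.

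The paper supplies that construction. It chooses a local transversal $\Sigma_{x_k} = \exp_{x_k}\{u \in \Ecu(x_k) : \|u\|<r\}$ to $\Ws$, uses uniform transversality (together with the assumed existence of the stable foliation) to produce, once $d(x_k,y_k)$ is small, a vector $w \in \Ecu(x_k)$ with $\exp_{x_k}(w) \in \Ws(y_k)$ and with $F^n(w)$ shadowing $v_{n+k}$ within $\delta/2$ for all $n\ge 0$. After reducing to $k=0$, it then runs the rate dichotomy on $w$: if $w\neq 0$ then $w\in \Cstar$, so $\|w_{n+1}\| > 2\lam(x_n)\|w_n\|$, while $\|v_{n+1}-w_{n+1}\| < \lam(x_n)\|v_n-w_n\|$, which contradicts $\|F^n(w)-v_n\|<\delta/2$ for large $n$; hence $w=0$ and $y\in\Ws(x)$. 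This is the precise sense in which ``a nonzero $\Ecu$-displacement would grow.'' Your proposal identifies the correct mechanism informally but stops just where the real argument begins; you should add the transversal/shadowing step and the two-sided rate estimate rather than cite it as standard, particularly since this lemma is itself essentially a step in the stable manifold theorem and an appeal to that theory as a black box risks circularity.

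Two small additional remarks: the inequality $d(f^n(x),f^n(y)) \le \|F^n(v)\|$ should really be a two-sided comparability coming from $\exp$ being bi-Lipschitz near the zero section; and you only need $\|F^n(v)\| < 2^{-n}\delta$ (not a bound in terms of $\|v\|$) to enter the regime where the transversal construction applies.
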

This lemma is more or less one of the steps in establishing the existence of
the stable foliation.
See for instance \cite[Section 5]{HPS}.
For completeness,
we give a proof which assumes that the stable foliation exists.

\begin{proof}
    Since $\Ecu$ is transverse to $\Es$, there is $r>0$
    such that the (incomplete) submanifold
    \[
        \Sigma_p := \exp_p \{u \in \Ecu(p) : \|u\| < r \}
    \]
    is transverse to $\Ws$.
    There is also $\eta > 0$ such if $d(p,q) < \eta$,
    then $\Ws(q)$ intersects $\Sigma_p$ in a point $z$
    which satisfies $d(f^n(q), f^n(z)) < \delta/2$ for all $n  \ge  0$.

    Write $x_n := f^n(x)$ and $v_n := F^n(v)$.
    Then $\|v_{n+1}\| < \lam(x_n) \|v_n\|$,
    so that $\|v_n\| < 2^{-n} \delta$ for all $n  \ge  0$.
    As $2^{-n} \delta < \eta$ for large $n$,
    there is $k  \ge  0$ and a vector $w \in \Ecu(x_k)$
    such that $\exp_{x_k}(w)$ and $\exp_{x_k}(v_k)$ lie on the same stable
    leaf and
    $F^n(w) - v_{n+k} < \delta/2$ for all $n \ge 0$.
    Without loss of generality, assume $k=0$ and write $w_n = F^n(w)$.
    If $w = 0$, the result is proved.
    Therefore, we assume $w  \ne  0$.
    Then $w  \in  \Cstar$ implies $w_n  \in  \Cstar$
    and therefore $\| w_{n+1} \| > 2 \lam(x_n) \|w_n\|$ for all $n  \ge  0$.
    However, one can show that
    $\| v_{n+1} - w_{n+1} \| < \lam(x_n) \|v_n - w_n\|$
    for all large $n$, and this gives a contradiction.
\end{proof}
\begin{notation}
    For the rest of the section,
    if $x_0$ and $y_0$ are distinct points on $\Lam$
    define $x_n = f^n(x_0)$ and $y_n = f^n(y_0)$ for all $n \in \bbZ$.
    For those indices where $d(x_n, y_n) < 1$,
    define $v_n \in T_{x_n}M$ such that $\|v_n\|<1$ and $y_n = \exp_{x_n}(v_n)$.
\end{notation}
\begin{lemma} \label{lemma:uniformcone}
    There is a uniform constant $N > 0$ such that
    for any distinct
    $x_0,y_0 \in \Lam$
    either $d(x_n, y_n) > \delta$ or $v_n  \in  \Cstar$
    for some $0  \le  n < N$.
\end{lemma}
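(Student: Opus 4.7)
The plan is to argue by contradiction and compactness. Suppose no such $N$ exists; then for every $k\ge 1$ one can find distinct $x^k_0,y^k_0\in\Lam$ for which each $v^k_n$ is defined, satisfies $\|v^k_n\|\le\delta$, and lies in $TM\sans\Cstar$ for all $0\le n<k$. The aim is to extract in the limit a pair of distinct points $\tilde x_*,\tilde y_*\in\Lam$ whose forward displacements $\tilde v^*_j$ stay in $\Cone$ with norm strictly below $\delta$ for every $j\ge 0$, so that \cref{lemma:Fstable} puts $\tilde y_*\in W^s(\tilde x_*)$ and the hypothesis $W^s(x)\cap\Lam=\{x\}$ delivers the contradiction.

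The main obstacle is that a naive limit $(x^k_0,y^k_0)\to(x_*,y_*)$ may degenerate to $x_*=y_*$: \cref{lemma:Fdelta}(1) forces the displacement to contract under $F$, so if $\|v^k_0\|\to 0$ the limit collapses. My remedy is to shift each pair along its orbit so that the displacement has a uniformly bounded macroscopic size before passing to the limit. The key observation is that the forward invariance of $\Cstar$ on small vectors, \cref{lemma:Fdelta}(2), dualizes by contrapositive to a backwards invariance of $TM\sans\Cstar$ under $F^{-1}$; combined with \cref{lemma:Fdelta}(1) via the identity $v=F(F^{-1}v)$ this further shows that $F^{-1}$ at least doubles norms in $\Cone$. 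Thus $v^k_{-m}$ can be iterated backwards, staying inside $\Cone$, while the norm grows by at least a factor of two at each step and at most the uniform Lipschitz constant of $F^{-1}$ near the zero section. One may then choose constants $0<\delta_0<\delta_1<\delta$ (and, when $\|v^k_0\|$ is already large, combine with a forward iterate rather than a backward one) so that for every $k$ there is an integer $n_k$ with $\delta_0\le\|v^k_{-n_k}\|\le\delta_1$.

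Set $\tilde x^k:=f^{-n_k}(x^k_0)$ and $\tilde y^k:=f^{-n_k}(y^k_0)$, both in $\Lam$ by invariance. By compactness of $\Lam$, pass to a subsequence with $\tilde x^k\to\tilde x_*$ and $\tilde y^k\to\tilde y_*$ in $\Lam$; since the displacements $\tilde v^k_0$ have norms in $[\delta_0,\delta_1]$, the limit displacement $\tilde v^*_0$ is nonzero, forcing $\tilde x_*\ne\tilde y_*$. For every fixed $j\ge 0$ and all sufficiently large $k$, the shifted displacement $\tilde v^k_j=v^k_{j-n_k}$ lies in $\Cone$ with $\|\tilde v^k_j\|\le\delta_1$, using the forward hypothesis in the original indexing for $j-n_k\ge 0$ and the backwards invariance for $j-n_k<0$, together with the forward contraction of $F$ on $\Cone$. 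Continuity of $F$ and of $\exp$ pass these bounds to the limit, yielding $\tilde v^*_j\in\Cone$ with $\|\tilde v^*_j\|\le\delta_1<\delta$ for every $j\ge 0$. \cref{lemma:Fstable} now puts $\tilde y_*\in W^s(\tilde x_*)$, and since $W^s(\tilde x_*)\cap\Lam=\{\tilde x_*\}$ this forces $\tilde x_*=\tilde y_*$, contradicting $\|\tilde v^*_0\|\ge\delta_0>0$.
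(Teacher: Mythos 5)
Your proof is correct and arrives at the same conclusion by a structurally different route. Both arguments rest on the same normalization device: pull a close pair backwards along the orbit, exploiting the backward expansion of $F^{-1}$ on $TM \sans \Cstar$ (the contrapositive of \cref{lemma:Fdelta}(2) combined with \cref{lemma:Fdelta}(1)), until the displacement reaches a macroscopic size bounded away from zero and from $\delta$. The divergence is in what happens next. The paper argues directly: it applies \cref{lemma:Fstable} pointwise to obtain, for each pair $(x_0,y_0)$, a local $N$, then covers the compact set $\{\ep \le d(x,y) \le \delta\} \subof \Lam \times \Lam$ by finitely many neighbourhoods to uniformize $N$ on medium-separated pairs, and finally reduces close pairs to medium ones by the backward iteration. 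You instead argue by contradiction: extract a sequence of putative counterexamples, normalize each by shifting along the orbit into a window $[\delta_0,\delta_1]$, pass to a convergent subsequence in $\Lam\times\Lam$, and apply \cref{lemma:Fstable} exactly once, to the limit pair $(\tilde x_*,\tilde y_*)$, to contradict $\Ws(\tilde x_*)\cap\Lam=\{\tilde x_*\}$. These are logically interchangeable (open cover versus sequential compactness), but your version concentrates the use of \cref{lemma:Fstable} on a single concrete limit, which is slightly more transparent than spelling out the open-cover construction; the cost is the extra bookkeeping of the two-sided normalization and of passing the cone and norm bounds to the limit.

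One place to be a bit more careful than your sketch is: the claim that $F^{-1}$ ``at least doubles norms in $\Cone$'' via $v = F(F^{-1}v)$ and \cref{lemma:Fdelta}(1) requires knowing $\|F^{-1}(v)\| < \delta$ \emph{before} invoking that lemma, and similarly the contrapositive of \cref{lemma:Fdelta}(2) requires $\|F^{-1}(v)\| < \delta$ to conclude $F^{-1}(v) \notin \Cstar$. So the backward induction must establish $v^k_{-m} \notin \Cstar$ and $\|v^k_{-m}\| < \delta$ simultaneously at each step; this works because the previous norm is $< \delta_0$ and the Lipschitz constant $L$ of $F^{-1}$ near the zero section controls the overshoot, provided you choose $\delta_1 \ge L\delta_0$ with $\delta_1 < \delta$, which is what pins down the admissible choice of $\delta_0,\delta_1$. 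This is a detail, not a gap, and the paper's own backward iteration step carries an analogous implicit induction.
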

\begin{proof}
    First note that since $\Ws(x_0) \cap \Lam = \{x_0\}$ by assumption,
    \cref{lemma:Fstable} implies that such an $N$ exists for each pair
    $(x_0, y_0)$
    considered on its own.
    The goal is to find a uniform constant $N$ which works for all pairs.
    Let $0 < \ep < \delta$ be such that $\|F(v)\| < \ep$
    implies $\|v\| < \delta$.
    The set
    \begin{math}
        \{(x,y) \in \Lam \times \Lam : \ep  \le  d(x,y)  \le  \delta\}
    \end{math}
    is compact.
    One may then use an open cover to show that
    there is a uniform constant $N > 0$ such that
    if $\ep  \le  d(x_0, y_0)  \le  \delta$ then
    either $d(x_n, y_n) > \delta$ or $v_n  \in  \Cstar$
    for some $0  \le  n < N$.

    Now suppose $0 < d(x_0, y_0) < \ep$.
    Let $m  \ge  0$ be the smallest integer such that
    either $\|v_{-m}\| \ge \ep$ or $v_{-m}  \in  \Cstar$.
    Such an $m$ must exist as $F \inv$ uniformly expands
    vectors in $\Cone$.
    If $v_{-m}  \in  \Cstar$, then $v_{-m+1}  \in  \Cstar$
    and the minimality of $m$ implies that $m = 0$.
    If $\|v_{-m}\| \ge \ep$, then $\|v_{-m}\|<\delta$ by the choice of $\ep$
    and so there is $0  \le  n < N$ such that $v_{n-m}  \in  \Cstar$.
    Since $m  \ge  0$,
    this implies that $v_n  \in  \Cstar$.
\end{proof}
\begin{cor}
    There is a sequence $\{\ep_n\}$ of positive numbers
    such that if $d(x_0, y_0) < \ep_n$ then
    $v_0  \in  Df^n(\Cstar)$.
\end{cor}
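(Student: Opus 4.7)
The plan is to combine \cref{lemma:uniformcone}, \cref{lemma:Fdelta}, and \cref{lemma:coneback} through a time-shift. The picture is that \cref{lemma:uniformcone} only guarantees that \emph{some} $v_m$ enters $\Cstar$ among $N$ consecutive iterates; to force this to happen for $v_{-n-1}$ specifically, I will apply \cref{lemma:uniformcone} sufficiently far in the past and then iterate forward with \cref{lemma:Fdelta}(2) until reaching index $-n-1$. Once $v_{-n-1} \in \Cstar$ and is small, \cref{lemma:coneback} converts this into $v_0 = F^{n+1}(v_{-n-1}) \in Df^n(\Cstar)$, which is exactly the conclusion sought.

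In detail, I would use compactness of $M$ together with uniform continuity of the finitely many iterates $f^{-1}, \ldots, f^{-(n+N)}$ to choose $\ep_n > 0$ so small that $d(x_0, y_0) < \ep_n$ forces $d(x_j, y_j) < \delta$ for every $-n-N \le j \le 0$ and in addition $\|v_{-n-1}\| < r_n$, where $r_n$ is the constant from \cref{lemma:coneback}. Applying \cref{lemma:uniformcone} to the pair $(x_{-n-N}, y_{-n-N})$ then produces some index $-n-N \le m \le -n-1$ at which either $d(x_m, y_m) > \delta$ or $v_m \in \Cstar$. The first alternative is excluded by the choice of $\ep_n$, so $v_m \in \Cstar$. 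Since $\|v_j\| < \delta$ for each $m \le j \le -n-1$, repeated application of \cref{lemma:Fdelta}(2) yields $v_{-n-1} \in \Cstar$, and then \cref{lemma:coneback} gives $v_0 \in Df^n(\Cstar)$.

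The only mild wrinkle is the case $n = 0$, for which \cref{lemma:coneback} is stated only for $n > 0$. This case can be handled by the same scheme, applying \cref{lemma:Fdelta}(2) one additional time in place of \cref{lemma:coneback} to produce $v_0 \in \Cstar = Df^0(\Cstar)$ directly. I do not anticipate a serious obstacle; the main care needed is bookkeeping the time-shift so that both the cone inclusion $v_j \in \Cstar$ and the norm bound $\|v_j\| < \delta$ propagate correctly across each forward step, which is precisely what the choice of $\ep_n$ via uniform continuity is designed to guarantee.
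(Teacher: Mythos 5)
Your argument follows the paper's proof essentially step for step: choose $\ep_n$ by uniform continuity so that backward iterates stay close, invoke \cref{lemma:uniformcone} far enough in the past, propagate the cone membership forward via \cref{lemma:Fdelta}(2) to reach $v_{-n-1}$, and finish with \cref{lemma:coneback}. The extra remark handling $n=0$ (where \cref{lemma:coneback} is not stated) is a correct and careful addition that the paper leaves implicit.
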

\begin{proof}
    Using $\delta$, $r_n$, and $N$ as above,
    take $\ep_n > 0$ small enough that
    \[
        d(x_0,y_0) < \ep_n  \quad \Rightarrow \quad  d(x_{-k}, y_{-k}) < \min\{\delta, r_n\}
    \]
    for all $0  \le  k  \le  N + n + 1$.
    By \cref{lemma:uniformcone}, $v_{-k}  \in  \Cstar$ for some $k  \ge  n+1$
    which further implies that $v_{-n-1}  \in  \Cstar$.
    The result then follows
    from \cref{lemma:coneback}.
\end{proof}
Since 
\begin{math}
    \bigcap_{n  \ge  0} Df^n(\Cstar) = \Ecu,
\end{math}
this shows that $\Lam$ is a $C^1$ submanifold tangent to $\Ecu$.

\section{Cross sections} \label{sec:cross} 

To prove \cref{thm:phcross},
we will combine \cref{thm:toperiodic} with the following result,
applied to a flow along the unstable direction.

\begin{thm} \label{thm:cross}
    Let $M$ be a compact connected manifold with boundary,
    $\psi$ a $C^0$ flow on $M$,
    $g:M \to [0,1]$ a continuous function,
    and $\ell > 0$ a constant
    such that
    \begin{enumerate}
        \item $g(\del M) = \{0, 1\},$ and

        \item if $x \in M$, $t \in \bbR$, 
        $0 < g(x) < 1$
        and $|t| > \ell$, then $g(\psi_t(x)) \in \{0,1\}$.
    \end{enumerate}
    Then, there is a compact codimension one submanifold $S$
    in the interior of $M$ which intersects any orbit in at most one point.
\end{thm}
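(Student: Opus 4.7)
The plan is to construct the cross-section as a level set $h^{-1}(c)$ of the time-averaged function
\[
    h(x) = \frac{1}{2T}\int_{-T}^T g(\psi_t(x))\,dt,
\]
for suitable $T > \ell/2$ and $c \in (0, 1)$.  The starting point is to extract from hypothesis~(2) the structural fact that for any orbit $\gamma$, the set $I_\gamma := \{t : g(\gamma(t)) \in (0,1)\}$ has diameter at most $\ell$: any $t_0 \in I_\gamma$ satisfies $I_\gamma \subseteq [t_0 - \ell,\, t_0 + \ell]$ by~(2), and intersecting over all such $t_0$ forces $\diam I_\gamma \le \ell$.  By continuity of $g$, on each of the two unbounded components of $\bbR \setminus I_\gamma$ the function $g \circ \gamma$ is constant with value in $\{0,1\}$, giving well-defined \emph{before} and \emph{after} states in $\{A, B\} := \{g^{-1}(0),\, g^{-1}(1)\}$ and splitting orbits into the types $A \to B$, $B \to A$, $A \to A$, $B \to B$, and the two constant types.

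After defining $h$, I would first note that $\partial M$ is $\psi$-invariant (each $\psi_t$ is a homeomorphism of $M$), that $g|_{\partial M}$ is therefore constant on each boundary orbit, and hence $h(\partial M) \subseteq \{0, 1\}$.  The key computation is the flow-derivative
\[
    \tfrac{d}{ds}\, h(\psi_s(x)) = \tfrac{1}{2T}\bigl(g(\psi_{s+T}(x)) - g(\psi_{s-T}(x))\bigr).
\]
Because $T > \ell/2 \ge \tfrac{1}{2}\diam I_\gamma$, the times $s - T$ and $s + T$ cannot both lie in the transition window $[\inf I_\gamma,\, \sup I_\gamma]$, so on any $A \to B$ orbit either $g(\gamma(s-T)) = 0$ or $g(\gamma(s+T)) = 1$ (or both) holds, giving $g(\gamma(s+T)) \ge g(\gamma(s-T))$.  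Hence $h$ is non-decreasing along such an orbit and strictly increasing with slope $1/(2T)$ on the interval $[\sup I_\gamma - T,\, \inf I_\gamma + T]$, over which $h$ sweeps out a range containing $[\ell/(2T),\, 1 - \ell/(2T)]$.  On bumpy orbits of type $A \to A$ or $B \to B$, the bound $\int g(\gamma(u))\,du \le \diam I_\gamma \le \ell$ (respectively for $1-g$) traps $h$ inside $[0,\, \ell/(2T)]$ or $[1 - \ell/(2T),\, 1]$, and on constant orbits $h \in \{0,1\}$.  Fixing any $c \in (\ell/(2T),\, 1 - \ell/(2T))$ then yields (i) $h^{-1}(c) \subseteq \interior M$, (ii) $h^{-1}(c)$ meets each orbit in at most one point, and (iii) $h^{-1}(c) \neq \varnothing$, since $h(M)$ is connected and contains both $0$ and $1$.

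For the $C^0$ manifold structure, at any $y \in h^{-1}(c)$ hypothesis~(2) rules out the orbit of $y$ being fixed or periodic (either would force $g$ to be constant in $\{0,1\}$ along the orbit, contradicting $h(y) = c$), so a flow box $\Phi(z, s) = \psi_s(z)$ is available through $y$.  The derivative $\partial_s(h \circ \Phi)$ depends continuously on $(z, s)$ and equals $1/(2T) > 0$ at $(y, 0)$, hence remains positive after shrinking the box.  A routine continuous implicit-function argument then writes $h^{-1}(c)$ near $y$ as the graph of a continuous function on a codimension-one transversal, yielding the desired $C^0$ codimension-one submanifold; compactness follows from that of $M$.

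The main obstacle is the monotonicity analysis.  The threshold $T > \ell/2$ is tuned precisely to forbid the pathology in which $s - T$ and $s + T$ simultaneously sit inside the diameter-$\ell$ transition window of a single orbit, as this is the only mechanism by which $h$ could otherwise oscillate on a ``bumpy'' transient orbit and cross the level $c$ more than once.  Once this bound and the orbit classification are in place, the manifold structure falls out of the observation that $h$, though only continuous on $M$, is automatically $C^1$ along each orbit with continuous $s$-derivative.
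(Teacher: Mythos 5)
Your approach is genuinely different from the paper's and, once a small constant is repaired, essentially correct.  The paper follows Fuller's averaging method: it forms the \emph{infinite weighted sum} $p(x) = \sum_{n\ge 1} 2^{-n} g_n(x)$ where $g_n(x) = \tfrac{1}{2n}\int_{-n}^{n} g(\psi_t(x))\,dt$, observes that $\Dpsi g_n \ge 0$ on $X_{0,1}$ for every $n$ and $>0$ for large $n$, and so gets $\Dpsi p > 0$ on all of $X_{0,1}$; it then shows $X_{0,1}\cap p^{-1}(\tfrac12)$ is clopen in $p^{-1}(\tfrac12)$ (in particular compact) and takes a connected component.  You instead use a single average $h = g_T$ and localize the positivity: you observe that the $A\!\to\!A$ and $B\!\to\!B$ orbits can never reach the interior band of values, so a level set $h^{-1}(c)$ with $c$ in that band meets only $A\!\to\!B$ and $B\!\to\!A$ orbits, on which $\partial_s h = \pm 1/(2T)$ exactly.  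This buys you a simpler function and a one-step compactness (a level set of a continuous function on a compact $M$), at the cost of a slightly more delicate quantitative orbit analysis.  The paper's version is structured so that the sets $X_{i,j}$ and the component decomposition are reusable in \cref{prop:crossinvt}, which may be why the author kept the more elaborate construction.

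One slip needs fixing: your threshold should be $T > \ell$, not $T > \ell/2$.  The condition $T > \ell/2$ does guarantee that $s-T$ and $s+T$ cannot both lie in the diameter-$\le\ell$ transition window, so $h$ is monotone along $A\!\to\!B$ orbits; but that alone is not enough.  To have $c$ available at all you need $\ell/(2T) < 1 - \ell/(2T)$, i.e.\ $T > \ell$; otherwise the interval $(\ell/(2T),\, 1 - \ell/(2T))$ is empty and the $A\!\to\!A$ and $B\!\to\!B$ bounds $h \le \ell/(2T)$, $h \ge 1 - \ell/(2T)$ fail to carve out any admissible $c$.  With $T > \ell$ everything closes.  (Also, at a $B\!\to\!A$ point the flow-derivative is $-1/(2T)$ rather than $1/(2T)$; the sign is irrelevant to the implicit-function step, but the statement should read $\pm 1/(2T)$.)  You should also make explicit that the existence of a flow box through a non-rest point for a merely $C^0$ flow is itself a theorem (Whitney--Bebutov), which the paper obtains by citing Fuller; it is not automatic from the definition of a continuous flow.
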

Assume now that the hypotheses in the \cref{thm:cross} hold.
Note that $\psi$ is a global flow defined for all time.
For each $t \in \bbR$, $\psi_t:M \to M$ is a homeomorphism,
and so $\del M$ is invariant under the flow.
For $i,j \in \{0,1\}$,
define
\[
    X_{i,j} = \big\{x \in M : \lim_{t \to -\infty} g(\psi_t(x)) = i
                         \,\,\text{and}\,
                         \lim_{t \to +\infty} g(\psi_t(x)) = j \big\}.
\]
Since $g(\del M) = \{0,1\}$,
at least one boundary component is contained in $X_{0,0}$
and at least one is contained in $X_{1,1}$.
The second item in the theorem
implies that $M = X_{0,0} \cup X_{0,1} \cup X_{1,0} \cup X_{1,1}$.

\begin{lemma}
    The subsets $X_{0,0}$ and $X_{1,1}$ are closed.
\end{lemma}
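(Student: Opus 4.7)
The plan is to first extract from hypothesis (2) a structural description of each $\psi$-orbit and then run a contradiction argument. For $x \in M$, let $A_x = \{t \in \bbR : 0 < g(\psi_t(x)) < 1\}$. Applying hypothesis (2) at $\psi_s(x)$ for each $s \in A_x$ forces $g(\psi_{s+u}(x)) \in \{0,1\}$ whenever $|u| > \ell$, so $A_x$ is an open subset of $\bbR$ of diameter at most $2\ell$ (possibly empty). Outside $A_x$, the continuous function $t \mapsto g(\psi_t(x))$ takes values in the discrete set $\{0,1\}$ and is therefore constant on each connected component of $\bbR \setminus A_x$. In particular the limits $g_\pm(x) = \lim_{t \to \pm\infty} g(\psi_t(x))$ exist in $\{0,1\}$ for every $x$, with $g(\psi_t(x)) = g_+(x)$ for $t > \sup A_x$ and $g(\psi_t(x)) = g_-(x)$ for $t < \inf A_x$.

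Next I would prove $X_{0,0}$ closed by contradiction. Assume $x_n \in X_{0,0}$ converges to some $x$ with $g_+(x) = 1$. Choose $T$ such that $g(\psi_t(x)) = 1$ for all $t \ge T$, and set $t^* := T + 2\ell + 1$. Continuity of $(y, t) \mapsto g(\psi_t(y))$ gives $g(\psi_T(x_n)) \to 1$ and $g(\psi_{t^*}(x_n)) \to 1$; in particular $g(\psi_T(x_n)) > 0$ for $n$ large. Since $g_-(x_n) = g_+(x_n) = 0$, the function $t \mapsto g(\psi_t(x_n))$ vanishes on both unbounded components of $\bbR \setminus A_{x_n}$ (and identically when $A_{x_n} = \varnothing$), so $g(\psi_T(x_n)) > 0$ forces $A_{x_n}$ to be nonempty with $T \in [\inf A_{x_n}, \sup A_{x_n}]$. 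The diameter bound then yields $\sup A_{x_n} \le \inf A_{x_n} + 2\ell \le T + 2\ell < t^*$, so $g(\psi_{t^*}(x_n)) = g_+(x_n) = 0$, contradicting $g(\psi_{t^*}(x_n)) \to 1$. The case $g_-(x) = 1$ is handled by reversing time, and the closedness of $X_{1,1}$ follows by swapping the roles of $0$ and $1$ throughout.

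The main obstacle is the bookkeeping in the contradiction step, and in particular the recognition that both halves $g_-(x_n) = 0$ and $g_+(x_n) = 0$ of the condition $x_n \in X_{0,0}$ must be used together. One might hope that the individual level sets $g_+\inv(\{0\})$ or $g_-\inv(\{0\})$ are themselves closed, but this is not true in general: on an interval with an attracting and a repelling boundary fixed point and a suitable $g$, one can produce sequences with $g_+ = 1$ converging to a fixed point with $g_+ = 0$. It is the combined $X_{0,0}$ condition that pins the transition window $A_{x_n}$ inside an interval of length $2\ell$ around $T$, at which point continuity of the flow at time $t^*$ closes the argument.
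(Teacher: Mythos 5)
Your proof is correct and takes essentially the same approach as the paper: both rely on hypothesis (2) to confine the times at which $g$ is nonzero along an orbit of a point of $X_{0,0}$ to an interval of length at most a fixed multiple of $\ell$, and both transfer this bound from the sequence $x_n$ to the limit $x$ via continuity of the flow. The paper's version is slightly terser (it picks a single $s$ with $g(\psi_s(x)) \ne 0$, deduces $g(\psi_t(x_k)) = 0$ for $|t-s|>\ell$, and pushes back through continuity to conclude $x \in X_{0,0}$ after all), while you make the transition window $A_x$ explicit and derive a pointwise contradiction at the chosen time $t^*$, but the underlying mechanism is the same.
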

\begin{proof}
    Suppose $\{x_k\}$ is a sequence in $X_{0,0}$,
    converging to $x \in M \without X_{0,0}$.
    Then there is $s \in \bbR$ such that
    $g(\psi_s(x))  \ne  0$.
    As $\psi_s$ is continuous,
    $g(\psi_s(x_k))  \ne  0$ for all large $k$.
    Then $g(\psi_t(x_k)) = 0$ for all large $k$ and all $t \in \bbR$ with
    $|t-s|>\ell$.
    By continuity, $g(\psi_t(x)) = 0$
    for all $t$ with $|t-s|>\ell$.
      \end{proof}
\begin{cor}
    At least one of $X_{0,1}$ or $X_{1,0}$ is non-empty.
\end{cor}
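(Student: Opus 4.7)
The plan is to argue by contradiction using the connectedness of $M$. Suppose that both $X_{0,1}$ and $X_{1,0}$ are empty. Then the decomposition $M = X_{0,0} \cup X_{0,1} \cup X_{1,0} \cup X_{1,1}$ reduces to $M = X_{0,0} \cup X_{1,1}$.

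The four sets $X_{i,j}$ are pairwise disjoint by their definition, since a single orbit cannot have two distinct forward limits (or two distinct backward limits) of $g$. In particular, $X_{0,0}$ and $X_{1,1}$ are disjoint. By the previous lemma, both are closed. Moreover, each is non-empty, because the hypothesis $g(\del M) = \{0,1\}$ guarantees that at least one boundary component, being flow-invariant and mapped by $g$ to $\{0\}$, is entirely contained in $X_{0,0}$, and analogously for $X_{1,1}$.

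Thus $M$ would be expressed as the disjoint union of two non-empty closed subsets, contradicting the assumption that $M$ is connected. Therefore at least one of $X_{0,1}$ and $X_{1,0}$ must be non-empty.

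There is no real obstacle here; the corollary is essentially an immediate consequence of connectedness once the lemma has established closedness of $X_{0,0}$ and $X_{1,1}$, combined with the observation that the boundary conditions force both of these sets to be non-empty.
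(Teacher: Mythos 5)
Your proof is correct and takes essentially the same approach as the paper, which argues in one line that if $X_{0,1}$ and $X_{1,0}$ were both empty then $X_{0,0}$ and $X_{1,1}$ would be two nonempty clopen sets disconnecting $M$. You have simply spelled out the details (disjointness, closedness from the lemma, nonemptiness from the boundary condition) that the paper leaves implicit.
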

\begin{proof}
    Otherwise, $X_{0,0}$ and $X_{1,1}$ disconnect $M$ into two clopen subsets.
\end{proof}
Without loss of generality, assume $X_{0,1}$ is non-empty.

Also define $U_{0,0}$ as
\[
    U_{0,0} = \{x \in X_{0,0} :
    \, \text{there is $t \in \bbR$ such that $g(\psi_t(x))  \ne  0$} \}
\]
and define $U_{1,1}$ similarly.

\begin{lemma} \label{lemma:XUopen}
    The subsets $X_{0,1}$, $X_{1,0}$, $U_{0,0}$, and $U_{1,1}$ are open.
\end{lemma}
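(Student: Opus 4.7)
The plan is to handle the four sets by essentially the same idea: for each point $x$ in the set, produce a single ``witness time'' $t_0$ at which $g(\psi_{t_0}(x))$ lies strictly in $(0,1)$, then use hypothesis (ii) to control the behaviour of $g$ on nearby orbits outside a bounded window around $t_0$. The argument for $X_{0,1}$ is the template, $X_{1,0}$ is symmetric, and $U_{0,0}$, $U_{1,1}$ need only a small modification.

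For $x \in X_{0,1}$, I would first apply the intermediate value theorem to the continuous map $t \mapsto g(\psi_t(x))$, which has limits $0$ at $-\infty$ and $1$ at $+\infty$, to obtain $t_0 \in \bbR$ with $0 < g(\psi_{t_0}(x)) < 1$. By continuity of $\psi_{t_0}$ and $g$, the strict inequality $0 < g(\psi_{t_0}(x')) < 1$ persists on a neighbourhood of $x$. Hypothesis (ii) applied at $\psi_{t_0}(x')$ then gives $g(\psi_t(x')) \in \{0,1\}$ for every $t$ with $|t-t_0| > \ell$. Since $t \mapsto g(\psi_t(x'))$ is continuous and the intervals $(t_0+\ell, \infty)$ and $(-\infty, t_0-\ell)$ are connected, the function is identically $0$ or identically $1$ on each. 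To pin down which, I would pick $t^+ > t_0 + \ell$ large enough that $g(\psi_{t^+}(x)) > \tfrac{1}{2}$ and shrink the neighbourhood of $x$ so the same inequality holds at every $x'$; since $g(\psi_{t^+}(x')) \in \{0,1\}$, it must equal $1$. A symmetric choice of $t^- < t_0 - \ell$ forces the value $0$ at $-\infty$, so $x' \in X_{0,1}$.

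For $U_{0,0}$ the only change is in producing $t_0$: the defining condition supplies some $s$ with $g(\psi_s(x)) \neq 0$, and combined with $g(\psi_t(x)) \to 0$ at $\pm\infty$, the intermediate value theorem again gives $t_0$ with $0 < g(\psi_{t_0}(x)) < 1$. The argument above then shows $x' \in X_{0,0}$ for $x'$ near $x$; the additional condition $g(\psi_{t_0}(x')) \neq 0$ is itself open in $x'$, so $x' \in U_{0,0}$. The case $U_{1,1}$ is symmetric.

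The main subtlety I expect is simply ensuring the witness time always exists: the hypotheses defining $X_{0,1}$, $X_{1,0}$, $U_{0,0}$, $U_{1,1}$ are each just strong enough to force $g$ to attain some value strictly between $0$ and $1$ somewhere along the orbit through $x$, which is exactly where the parallel argument for $X_{0,0}$ or $X_{1,1}$ on its own would fail (the orbit could have $g \equiv 0$ or $g \equiv 1$). Once $t_0$ is in hand, the remainder is routine continuity and connectedness of the two tail intervals.
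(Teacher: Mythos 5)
Your proposal is correct and follows essentially the same route as the paper: the paper also locates a single time at which $g$ lies strictly between $0$ and $1$, records the inequality $g(\psi_{t-\ell}(x)) < \tfrac{1}{3} < g(\psi_t(x)) < \tfrac{2}{3} < g(\psi_{t+\ell}(x))$ (respectively a $\delta$-version for $U_{0,0}$), propagates it by continuity to a neighbourhood, and lets hypothesis (ii) together with connectedness of the tails $(-\infty, t-\ell)$ and $(t+\ell, \infty)$ force nearby orbits into the same $X_{i,j}$. Your version just spells out the connectedness/witness-time step that the paper leaves implicit; the mathematical content is identical.
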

\begin{proof}
    If $x \in X_{0,1}$, then there is $t \in \bbR$
    such that
    \[
        g(\psi_{t-\ell}(x))
        < \tfrac{1}{3} <
        g(\psi_{t}(x))
        < \tfrac{2}{3} <
        g(\psi_{t+\ell}(x)).
    \]
    This inequality also holds for all points in a neighbourhood $V$ of $x$
    and implies that $V \subset X_{0,1}$.
    If $x \in U_{0,0}$, then there is $t \in \bbR$ and $\delta > 0$ such that
    \[
        g(\psi_{t-\ell}(x))
        < \delta <
        g(\psi_{t}(x))
        > \delta >
        g(\psi_{t+\ell}(x)).
    \]
    This also holds on a neighbourhood of $x$ and shows that
    $U_{0,0}$ is open.
    The cases of $X_{1,0}$ and $U_{1,1}$ are analogous.
\end{proof}
For the remainder of the proof, we assume $\ell < 1$.
This can always be achieved by rescaling time for the flow,
and makes the definitions simpler in what follows.
We now adapt the averaging method of Fuller \cite{fuller1965}
to this setting.
For each integer $n  \ge  1$, define $g_n: M \to [0,1]$ by
\[
    g_n(x) = \tfrac{1}{2n} \int_{-n}^{+n} g(\psi_t(x))\, dt.
\]
Let $\Dpsi$ denote the derivative along the flow.
That is, for a function $\alpha:M \to \bbR$, define
\[
    \Dpsi \alpha(x) := \lim_{t \to 0} \tfrac{1}{t}
    \big( \alpha(\psi_t(x)) - \alpha(x) \big).
\]
The Fundamental Theorem of Calculus implies that
\[
    \Dpsi g_n(x) = \tfrac{1}{2n}
    \big[ g(\psi_{n}(x)) - g(\psi_{-n}(x)) \big].
\]
The assumption $\ell < 1$ implies that if $x \in X_{0,1}$,
then at least one of
$g(\psi_{-n}(x)) = 0$
or
$g(\psi_{n}(x)) = 1$
holds.
Hence,
\begin{math}
    0  \le  \Dpsi g_n(x)  \le  \tfrac{1}{2n}
\end{math}
for all $n$ and
\begin{math}
    \Dpsi g_n(x) = \tfrac{1}{2n}
\end{math}
for a fixed $x  \in  X_{0,1}$ and large $n$.
Define $p:M \to [0,1]$ by
\[
    p(x) = \sum_{n=1}^\infty 2^{-n} g_n(x).
\]
If $x \in X_{0,1}$, one can show that
$\lim_{t \to -\infty} p (\psi_t(x)) = 0$,\,
$\lim_{t \to +\infty} p (\psi_t(x)) = 1$, and
\[
    \Dpsi p(x) = \sum_{n=1}^{\infty} 2^{-n} \Dpsi g_n(x) > 0.
\]
Hence, 
any orbit in $X_{0,1}$ intersects
$p \inv(\tfrac{1}{2})$
in exactly one point.
As in \cite{fuller1965},
one can then show that locally $X_{0,1} \cap \phalf$
has the structure of a codimension one $C^0$ submanifold.


\begin{lemma} \label{lemma:xpopen}
    For $i,j \in \{0,1\}$,
    the subset $X_{i,j} \cap \phalf$ is open in the topology of
    $\phalf$.
\end{lemma}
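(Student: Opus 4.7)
The plan is to reduce all four cases to the fact that $X_{0,1}$, $X_{1,0}$, $U_{0,0}$, and $U_{1,1}$ are open in $M$, as already established in \cref{lemma:XUopen}, and then exploit the explicit formula for $p$ to handle the diagonal cases.

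First I would dispense with the off-diagonal cases $(i,j) \in \{(0,1),(1,0)\}$. Since $X_{0,1}$ and $X_{1,0}$ are open in $M$, their intersections with $\phalf$ are open in the subspace topology on $\phalf$, which is exactly what is needed.

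The substantive cases are $(i,j) = (0,0)$ and $(i,j) = (1,1)$, where $X_{i,j}$ is only closed in $M$. Here the idea is to show that the points ruled out by $\phalf$ are precisely the orbits on which $g$ is constant, so that $X_{0,0} \cap \phalf$ actually coincides with $U_{0,0} \cap \phalf$ (and likewise $X_{1,1} \cap \phalf = U_{1,1} \cap \phalf$). Concretely, if $x \in X_{0,0} \setminus U_{0,0}$, then $g(\psi_t(x)) = 0$ for every $t \in \bbR$, whence $g_n(x) = 0$ for every $n \ge 1$ and thus $p(x) = 0$, so $x$ cannot lie in $\phalf$. Symmetrically, if $x \in X_{1,1} \setminus U_{1,1}$, then $g(\psi_t(x)) = 1$ for every $t$, $g_n(x) = 1$, and $p(x) = \sum_{n\ge 1} 2^{-n} = 1$, again excluding $x$ from $\phalf$. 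The reverse inclusions $U_{i,i} \subset X_{i,i}$ are immediate from the definitions.

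Combining these observations, $X_{i,i} \cap \phalf = U_{i,i} \cap \phalf$ for $i \in \{0,1\}$, and since $U_{0,0}$ and $U_{1,1}$ are open in $M$ by \cref{lemma:XUopen}, both intersections are open in $\phalf$. I do not anticipate any real obstacle: the only step requiring care is verifying that the constant-value orbits give $p(x) \in \{0,1\}$, and this follows directly from the definition of $p$ as a weighted average of the $g_n$.
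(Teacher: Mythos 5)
Your proposal is correct and follows essentially the same approach as the paper: the off-diagonal cases are handled directly by \cref{lemma:XUopen}, and the diagonal cases reduce to the observation that $X_{i,i} \cap \phalf = U_{i,i} \cap \phalf$ because an orbit on which $g$ is constantly $0$ (resp.\ $1$) has $p$-value $0$ (resp.\ $1$). You simply spell out the averaging computation a bit more explicitly than the paper does.
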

\begin{proof}
    The cases of $X_{0,1}$ and $X_{1,0}$ follow immediately from
    \cref{lemma:XUopen}.
    If $x \in X_{0,0} \cap \phalf$,
    then $p(x) = \half$ implies that $g(\psi_t(x))$
    cannot be zero for all $t$.
    Thus,
    $X_{0,0} \cap \phalf = U_{0,0} \cap \phalf$
    and is therefore open.
    Similarly for $X_{1,1} \cap \phalf$.
\end{proof}
\begin{cor} \label{cor:Xmanifolds}
    The set $X_{0,1} \cap \phalf$ is a finite disjoint union of compact connected
    codimension one $C^0$ submanifolds.
\end{cor}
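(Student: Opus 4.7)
The plan is to combine the three ingredients already assembled: the compactness of $\phalf$, the openness of $X_{0,1} \cap \phalf$ within $\phalf$ provided by \cref{lemma:xpopen}, and the local codimension-one submanifold structure noted in the paragraph just preceding the corollary.

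First I would observe that $\phalf$ is a closed subset of the compact manifold $M$, hence compact. Since $M = X_{0,0} \cup X_{0,1} \cup X_{1,0} \cup X_{1,1}$ as a disjoint union, the four sets $X_{i,j} \cap \phalf$ partition $\phalf$. By \cref{lemma:xpopen} each of these four sets is open in the subspace topology of $\phalf$; being the complement in $\phalf$ of the union of the other three open sets, each is therefore also closed in $\phalf$, and hence compact.

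Next, the Fuller-style averaging argument just above the corollary gives every point of $X_{0,1} \cap \phalf$ a neighbourhood in $\phalf$ homeomorphic to an open subset of $\bbR^{\dim M - 1}$: the strict positivity of $\Dpsi p$ on $X_{0,1}$ makes $p$ a local transversal coordinate to the flow, and the remaining coordinates come from a local cross-section. Combined with the compactness of $X_{0,1} \cap \phalf$ established in the previous step, this exhibits $X_{0,1} \cap \phalf$ as a compact codimension-one topological manifold without boundary embedded in $M$. A compact topological manifold has only finitely many connected components, each of which is itself a compact connected $C^0$ submanifold, which is exactly the claimed decomposition.

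The only subtlety I anticipate is checking that the local chart data from the averaging argument do assemble into a genuine topological manifold structure, i.e.\ that on overlaps the charts are related by homeomorphisms. This, however, is automatic from the fact that the charts all identify a neighbourhood in $\phalf$ with a local cross-section of the flow $\psi$, and any two such cross-sections through nearby orbits are related by the flow's holonomy, which is a homeomorphism in the $C^0$ category. No further analytic input is required.
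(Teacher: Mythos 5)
Your proof is correct and follows essentially the same route as the paper: use Lemma~\ref{lemma:xpopen} together with the disjoint decomposition $M = X_{0,0} \cup X_{0,1} \cup X_{1,0} \cup X_{1,1}$ to conclude that $X_{0,1} \cap \phalf$ is clopen in the compact set $\phalf$, hence compact, and then combine this with the local $C^0$ submanifold structure from the Fuller averaging to get finitely many compact components. The extra paragraph you add about chart compatibility on overlaps is a reasonable sanity check but is not spelled out in the paper, which simply cites Fuller for the local manifold structure.
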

\begin{proof}
    As noted above,
    $X_{0,1} \cap \phalf$ locally has the structure of a $C^0$ submanifold.
    By \cref{lemma:xpopen} and the fact that $M$
    splits into the disjoint union
    $M = X_{0,0} \cup X_{0,1} \cup X_{1,0} \cup X_{1,1}$,
    the subset $X_{0,1} \cap \phalf$
    is clopen in the topology of $\phalf$.
    In particular, this subset is compact and therefore consists of a finite
    number of compact connected submanifolds.
\end{proof}
To complete the proof of \cref{thm:cross},
take $S$ to be one of the components of $X_{0,1} \cap \phalf$.

\medskip

We now look at how these components interact with a diffeomorphism which
preserves the orbits of the flow.
In what follows,
let $S_1, \ldots, S_m$ be the connected components of $X_{0,1} \cap \phalf$.

\begin{prop} \label{prop:crossinvt}
    Suppose $f:M \to M$ is a homeomorphism which preserves the orbits of $\psi$
    and such that
    \[
            \lim_{t \to +\infty} g \psi_t(x) = 
            \lim_{t \to +\infty} g f \psi_t(x)
            \qandq
            \lim_{t \to -\infty} g \psi_t(x) = 
            \lim_{t \to -\infty} g f \psi_t(x)
    \]
    for all $x  \in  M_0$.
    Then there is $k  \ge  1$ such that an orbit of $\psi$ intersects
    a component $S_i$ if and only if the orbit intersects $f^k(S_i)$.
\end{prop}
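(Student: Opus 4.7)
The plan is to realize the connected components of $X_{0,1} \cup X_{1,0}$ as tubular neighbourhoods of the connected components of the cross-section $(X_{0,1} \cup X_{1,0}) \cap \phalf$, then observe that $f$ permutes this finite collection and take $k$ to be the order of the induced permutation. Writing $C_i = \pi^{-1}(S_i)$ for the tube over $S_i$, where $\pi$ is the retraction along the flow onto the cross-section, once $f^k(C_i) = C_i$ we have $f^k(S_i) \subset C_i$; since every orbit in $C_i$ meets $S_i$ in a unique point, every orbit passing through $f^k(S_i)$ also passes through $S_i$, and the converse follows by applying the same argument to $f^{-k}$.

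The first step is to check that $f$ preserves $X_{0,1} \cup X_{1,0}$ setwise. Because $f$ maps $\psi$-orbits to $\psi$-orbits, the curve $t \mapsto f\psi_t(x)$ parametrizes the orbit of $f(x)$, and the hypothesis equates each of the two limits of $g\psi_t(x)$ with the corresponding limit of $gf\psi_t(x)$. This forces the unordered pair of endpoint limits of $g$ along the orbit of $f(x)$ to equal that along the orbit of $x$, so $f$ preserves $X_{0,1} \cup X_{1,0}$ (and likewise $X_{0,0} \cup X_{1,1}$).

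Second, I identify the components of $X_{0,1} \cup X_{1,0}$. The calculation in the proof of \cref{thm:cross} gives $\Dpsi p > 0$ on $X_{0,1}$, and an identical argument with signs reversed yields $\Dpsi p < 0$ on $X_{1,0}$, so $p$ is strictly monotone along every orbit in $X_{0,1} \cup X_{1,0}$ and each such orbit meets $\phalf$ in a single point. A standard implicit-function-style argument, using continuity of $p$ and of the flow together with strict monotonicity, shows that the time-to-section map $\tau$ is continuous, hence so is the retraction $\pi(x) = \psi_{\tau(x)}(x)$. For each connected component $C$ of $(X_{0,1} \cup X_{1,0}) \cap \phalf$ the preimage $\pi^{-1}(C)$ is open, and is the continuous image of $C \times \bbR$ under $(y,t) \mapsto \psi_t(y)$, hence connected. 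These preimages partition $X_{0,1} \cup X_{1,0}$ into its connected components.

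By \cref{cor:Xmanifolds} (and its analogue for $X_{1,0}$) there are only finitely many such components, so $f$ induces a permutation of finite order on them; taking $k$ to be that order finishes the proof as outlined. The main technical obstacle is establishing continuity of $\pi$ and the consequent identification of the components of $X_{0,1} \cup X_{1,0}$ with components of the cross-section. Once this tube structure is in place the permutation argument is automatic, and no further interaction between $f$ and the function $p$ is needed.
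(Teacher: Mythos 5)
Your proof is correct and follows essentially the same route as the paper: both arguments rest on the continuity of the projection along orbits onto the section $\phalf$, the finiteness of components coming from \cref{cor:Xmanifolds}, and passing to the iterate given by the order of the induced permutation. The differences are only in packaging --- the paper projects $f(S_i)$ into $X_{0,1} \cap \phalf$ and identifies the image with some $S_j$, while you saturate the $S_i$ into flow-tubes and let $f$ permute the components of $X_{0,1} \cup X_{1,0}$ (which also quietly accommodates a possible reversal of orbit orientation).
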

\begin{proof}
    The hypotheses imply that $f(X_{0,1}) = X_{0,1}$.
    For each point $x \in f(S_i)$, there is a unique point $\hat x$
    on the orbit of $x$
    such that $p(\hat x) = \half$.
    Moreover, $\hat x$ depends continuously on $x$.
    The image $\{ \hat x : x \in f(S_i) \}$
    is a compact manifold in $X_{0,1} \cap \phalf$
    and is therefore equal to one of the $S_j$.
    This shows that, up to flowing along the orbits of $\psi$,
    the homeomorphism $f$ permutes the components $S_i$.
    Up to replacing $f$ again by an iterate,
    we may assume this is the identity permutation.
\end{proof}
\begin{proof}
    [Proof of \cref{thm:phcross}]
    First, consider the case where $\Eu$ is orientable.
    Define a $C^0$ flow $\psi$ such that the orbits of $\psi$
    are exactly the unstable leaves of $f$.
    This flow satisfies the hypotheses of \cref{thm:cross}
    (with $M_0$ in place of $M$).
    Consequently, there is a compact $C^0$ submanifold $S$
    in the interior of $M_0$
    which intersects each unstable leaf in at most one point.
    By \cref{prop:crossinvt},
    there is an iterate $f^k$ such that an unstable leaf intersects
    $f^k(S)$ if and only if it intersects $S$.
    Then, $\dist_u(S, f^k(S)) < \infty$ and \cref{thm:toperiodic}
    implies that there is a compact periodic center-stable
    submanifold $\Lam$ as desired.
    This concludes the orientable case.

    Instead of handling the non-orientable case directly,
    we assume now that there is an involution $\tau:M \to M$
    which commutes with $f$, 
    preserves the unstable foliation, 
    and reverses the orientation of $\Eu$.
    If $\Wu(\Lam)$ and $\Wu(\tau(\Lam)$) are disjoint,
    then $\Lam$ and $\tau(\Lam)$ are disjoint.
    If, instead, $\Wu(\Lam)$ intersects $\Wu(\tau(\Lam)$),
    then the argument
    in proof of \cref{prop:crossinvt}
    shows that $\dist_u(\Lam, \tau(\Lam)) < \infty$
    and the fact that $\Lam$ is $f$-periodic implies that $\Lam = \tau(\Lam)$.
    In either case,
    $\Lam$ quotients down to a compact submanifold embedded in $M/\tau$.
\end{proof}
\section{Making a calzone} \label{sec:calzone} 

In this section, we construct the example in \cref{thm:calzone}.
As in \cref{sec:fromzero},
it is slightly easier from the notational viewpoint
to switch the roles of $\Es$ and $\Eu$
in the construction.
Therefore, we will actually build a system with a normally repelling fixed
point and two intersecting center-unstable tori.

First, we build a partially hyperbolic subset of
$\bbT^2 \ti \bbR$
which is the union of two non-disjoint $cu$-tori.
Then, we explain how this partially hyperbolic subset
can be glued into the 3-torus in such a way to produce
a global dominated splitting.

The two $cu$-tori each have the same derived-from-Anosov dynamics
with a repelling fixed point.  They are glued together on the complement of
the basin of repulsion of this fixed point.
The $cu$-tori are, of course, tangent along this intersection
and the construction vaguely resembles the type of food called a calzone,
where two pieces of dough are pressed together to enclose a region
which is full of other ingredients.
A depiction of this construction is given in figure \ref{fig:calzone}.
\begin{figure}
    \centering
    \includegraphics{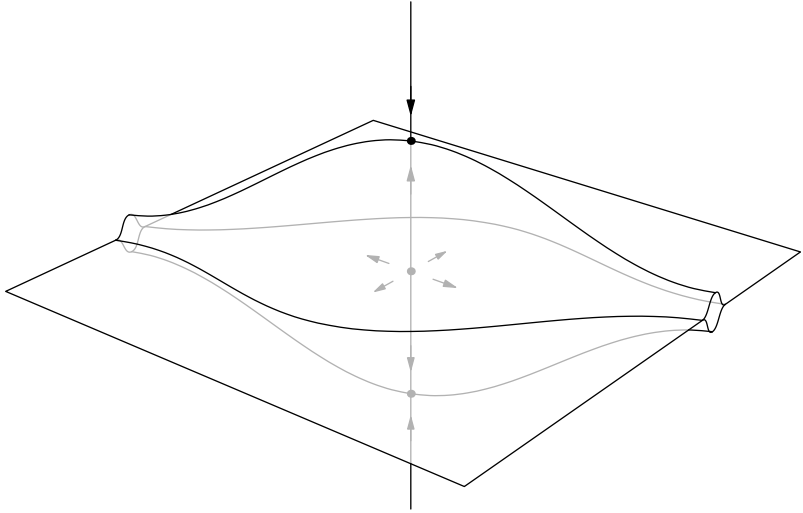}
    \caption[Calzone]{ Two intersecting center-unstable tori.}
    \label{fig:calzone}
\end{figure}
\medskip

Let $g : \bbT^2 \to \bbT^2$ be a weakly partially hyperbolic diffeomorphism
with a splitting of the form $\Eu \oplus \Ec$.
That is, 
\[
    \|Df v^c\| < \|Df v^u\|
    \qandq
    1 < \|Df v^u\|
      \]
hold for all $x  \in  M$ and unit vectors
$v^c  \in  \Ec(x)$, and
$v^u  \in  \Eu(x)$.
Further suppose that $q  \in  \bbT^2$ is a repelling fixed point for $g$.
Let
\[
    B(q) = \big\{ \, x  \in  \bbT^2 : \lim_{n \to \infty} g \invn(x) = q \, \big\}
      \]
be the basin of repulsion of $q$
and define $K = \bbT^2 \sans B(q)$.
Define a constant $0 < \lam < 1$ small enough that
\begin{math}
    \|Dg \, v\| > 2 \lam
\end{math}
for all unit vectors $v  \in  \bbT^2$.
Define a smooth function $\bt : \bbR \to \bbR$
with the following properties:{}
\begin{enumerate}
    \item $\bt$ is an odd function
    with fixed points exactly at -1, 0, and +1;

    \item the fixed point at zero is expanding with
    $1 < \bt'(0) < \|Dg v\|$
    for any unit vector $v  \in  T_q \bbT^2$;

    \item the fixed points -1 and +1 are contracting with
    $\bt'(-1) = \bt'(+1) = \lam$; and

    \item there is a constant $C > 1$ such that
    $\bt(s) = \lam s$ for all $s  \in  \bbR$ with $|s| > C$.
\end{enumerate}
\begin{figure}
    \centering
    \includegraphics{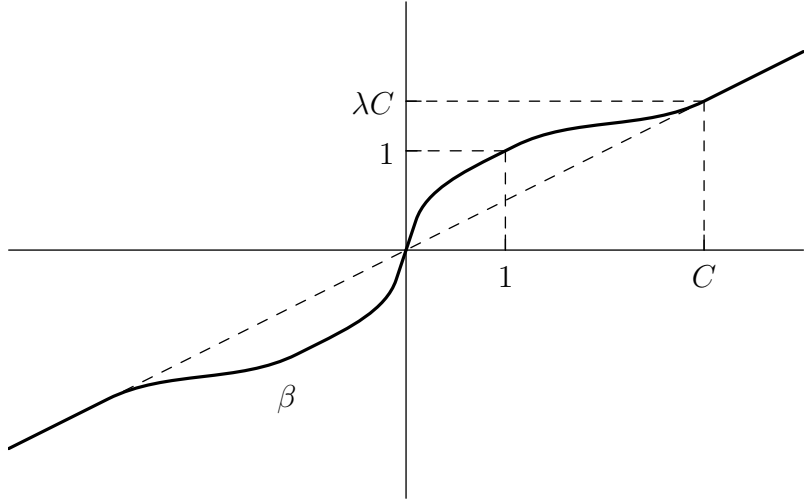}
    \caption[Beta]{The graph of the function $\bt$.}
    \label{fig:beta}
\end{figure}
See figure \ref{fig:beta}.
Define a smooth function $\alpha: \bbT^2 \to [0,1]$
which 
equals $0$ on a neighbourhood of $K$
and
equals $1$ on a neighbourhood of $q$.
Then define $f:\bbT^2 \times \bbR \to \bbT^2 \times \bbR$
by
\[
    f(x,s) = \big( g(x), \,
        (1 - \al(x)) \lam s  + \al(x)\bt(s) \big).
\]
We now look at the behaviour of tangent vectors under the
action of the derivative.
If $p = (x,s)  \in  \bbT^2 \ti \bbR$,
a tangent vector $u  \in  T_p (\bbT^2 \ti \bbR)$ may be decomposed
as $u = (v,w)$
with horizontal component $v  \in  T_x \bbT^2$ and
vertical component $w  \in  T_s \bbR$.

\begin{lemma} \label{lemma:vfate}
    For any point $p = (x,s)  \in  \bbT^2 \ti \bbR$ and
    any tangent vector
    \[    
        u = (v,w)  \in  T_p (\bbT^2 \ti \bbR),
    \]
    define $u_n = Df^n(u)$
    and let $v_n$ and $w_n$ be its horizontal and vertical components
    respectively.
    \begin{enumerate}
        \item If $v$ is non-zero, then the ratio
        \begin{math}
            \frac{\|w_n\|}{\|v_n\|}  \end{math}
        tends to 0 as $n  \to  +\infty$.

        \item If $v \notin \Ec_g(x)$, then the angle between
        $v_n$ and $\Eu_g(g^n(x))$ tends to 0
        as $n  \to  +\infty$.
    \end{enumerate}  \end{lemma}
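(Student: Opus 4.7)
The plan is to exploit the block-triangular form of $Df$. Writing $\phi(x,s) = (1-\al(x))\lam + \al(x)\bt'(s)$ and $\eta(x,s)(v) = D\al(x)(v)\,(\bt(s)-\lam s)$, I would first record that
\[
    Df(v,w) = \bigl(\,Dg(x)\,v,\; \eta(x,s)(v) + \phi(x,s)\,w\,\bigr),
\]
so $v_n = Dg^n(x)\,v$ is governed purely by $g$, while the vertical recursion reads $w_{n+1} = \eta(x_n,s_n)(v_n) + \phi(x_n,s_n)\,w_n$. Part (2) is then a standard consequence of the dominated splitting $\Eu_g \oplus \Ec_g$ for $g$: a forward-invariant cone field around $\Eu_g$ exists, and the domination forces any $v \notin \Ec_g(x)$ to enter this cone after finitely many iterates and then align with $\Eu_g$ at a geometric rate.

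For part (1) the key observation is that on an open neighbourhood $U$ of $K$ the function $\al$ vanishes identically, so there $\eta \equiv 0$ and $\phi \equiv \lam$, and the vertical recursion collapses to $w_{n+1} = \lam w_n$ while $\|v_{n+1}\| > 2\lam\,\|v_n\|$. Hence the ratio $r_n := \|w_n\|/\|v_n\|$ is at least halved at each step inside $U$. My plan is to show that for every $x \ne q$ the base orbit $x_n = g^n(x)$ eventually lies in $U$, and to check that $r_N$ is finite at the first such index. The latter is easy: $\bbT^2$ is compact, and $|s_n|$ is eventually trapped in $[-C,C]$ because $\bt(s) = \lam s$ for $|s|>C$ drags that coordinate toward the interval, so $\phi$ and $\eta$ stay uniformly bounded along the orbit.

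The main obstacle is showing $\omega_g(x) \subset K$ for every $x \in B(q)\setminus\{q\}$; this is where the source property of $q$ enters. I would choose a neighbourhood $W$ of $q$ on which $g\inv$ is a strict contraction towards $q$. If some subsequence $g^{n_k}(x) \to q$, then repeated application of $g\inv$ lands each $g^{n_k-j}(x)$ in $W$, geometrically closer to $q$ at every step; taking $j = n_k$ pins $x$ at $q$, contradicting $x \ne q$. Invariance and closedness of $\omega_g(x)$, combined with the fact that every orbit in $B(q)\setminus\{q\}$ has $q$ as its $\alpha$-limit, then rule out any other accumulation point in $B(q)$. Hence $\omega_g(x) \subset K \subset U$, so $x_n \in U$ for all large $n$.

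The exceptional case $x = q$ has to be handled directly, since then $\al(x_n) \equiv 1$ forever. However $D\al \equiv 0$ near $q$, so $\eta \equiv 0$ and the vertical multiplier is simply $\bt'(s_n)$. Under iteration of $\bt$, the sequence $s_n$ either sits at $0$ (where $\bt'(0) < \|Dg(q)\,v\|$ for every unit $v$) or tends to $\pm 1$ (where $\bt' = \lam$); in both subcases $|w_n|$ grows more slowly than $\|v_n\| \ge c^n \|v\|$ for $c = \min\{\|Dg(q)|_{\Eu_g}\|,\|Dg(q)|_{\Ec_g}\|\} > \bt'(0) > \lam$, which yields $r_n \to 0$.
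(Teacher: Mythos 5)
Your proof is correct, and the underlying idea matches the paper's: write $Df$ in block-triangular form, find a region where the derivative forces the ratio $\|w_n\|/\|v_n\|$ to decay geometrically, and show that every orbit eventually enters and stays in that region. The implementation, however, is noticeably different. The paper identifies the non-wandering set $NW(f) = (K \times \{0\}) \cup (\{q\} \times \{-1,0,+1\})$ up front, notes that $D\alpha$ vanishes in a neighbourhood $U$ of this set so that the ratio contracts by a uniform $\sigma < 1$ there, and then invokes the general fact that $\omega$-limit sets lie in $NW(f)$; this handles $x = q$ and $x \neq q$ simultaneously in one stroke. You instead anchor the ``good region'' to a neighbourhood of $K$ alone, which buys you a cleaner contraction factor ($1/2$) but forces a case split: a direct $\omega$-limit-set argument (using that $q$ is a repeller for $g$) to show base orbits with $x \neq q$ accumulate in $K$, plus a separate hands-on analysis of the product dynamics over $x = q$. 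Both routes are sound; the paper's is more uniform and concise, while yours is more self-contained in that it does not lean on the assertion of what $NW(f)$ is. One small point worth tightening in a final write-up: your claim that $r_N$ is finite at the first entry into the neighbourhood of $K$ ultimately rests on $v_n = Dg^n(x)v$ never vanishing and on the uniform exponential bounds $\|v_n\| \geq (2\lambda)^n\|v\|$ and $|w_{n+1}| \leq E\|v_n\| + \Phi|w_n|$; stating these explicitly would close the argument cleanly.
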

\begin{proof}
    The non-wandering set of $f$ is
    \[
        NW(f) = \big( K \ti \{0\} \big)
        \cup
        \big( \{q\} \ti \{-1,0,+1\} \big).
    \]
    At $(q,0)$ the condition on $\bt'(0)$
    implies that vectors in the horizontal direction are
    expanded more strongly
    than vectors in the vertical direction.
    At all other points in $NW(f)$,
    the condition on $\lam$ implies that
    a vector in the vertical direction is contracted more strongly
    than any vector in the horizontal direction.
    Hence, there is a neighbourhood $U$ of $NW(f)$ and a constant $\sig < 1$
    such that if $Df^n(p)  \in  U$,
    then
    \[
        \frac{\|w_{n+1}\|}{\|v_{n+1}\|}
         \le  \sig
        \frac{\|w_n\|}{\|v_n\|}.
    \]
    Since $Df^n(p)  \in  U$ for all large $n$, this implies item (1).

    From the definition of $f$,
    note that $v_n = Dg^n(v)$ for all $n$,
    and item (2) follows directly from fact that $g$ is weakly partially
    hyperbolic.
\end{proof}
From \cref{lemma:vfate},
one may show that on the invariant subset
\[
    X := \bigcap_{n  \ge  0} f^n \big( \bbT^2 \ti [-C,C] \big)
\]
there is a dominated splitting with three one-dimensional subbundles.
We will use $\Eu \oplus \Ec \oplus \Es$ to denote this splitting,
even though the $\Es$ direction is not uniformly contracting.

The fixed point $(q,1)$
is hyperbolic with a two-dimensional unstable direction.
Let $\Wcu(q,1)$ denote the two-dimensional unstable manifold
though this point.
This manifold may be expressed as the graph of a $C^1$ function
from $B(q)$ to $\bbR$.
Let $T_+$ be the closure of $\Wcu(q,1)$.
Then $T_+$ may be expressed as the graph of a continuous function
from $\bbT^2$ to $\bbR$ which is zero on all points in $K$.
One may show,
either directly or by a variant of \cref{thm:fromzero},
that $T_+$ is a $C^1$ submanifold tangent to $\Ec \oplus \Eu$.
Since $\Es$ is uniformly attracting on $T_+$,
this implies that the tangent bundle restricted to $T_+$
has a strongly partially hyperbolic splitting.
By symmetry, the closure of the unstable manifold through
the point $(q,-1)$
is also a surface, denoted $T_-$, with similar properties.
Thus, the union $T_+ \cup T_-$
is a partially hyperbolic set
and the intersecton
$T_+ \cap T_- = K \ti 0$ is non-empty.

\medskip

We now describe how this example may be embedded into $\bbT^3$.
The constant $C > 1$ was defined so that the equality
\begin{math}
    f(x,s) = (g(x), \lam s)
\end{math}
holds for all $(x,s)$ with $|s| > C$.
By rescaling the vertical $\bbR$ direction of $\bbT^2 \ti \bbR$,
one may, for any given $\ep > 0$,
define a similar example such that this equality
holds for all $(x,s)$ with $|s| > \tfrac{\ep}{4}$.
Then, take the construction of $f$ given in the proof of
\cite[Theorem 1.2]{ham20XXconstructing}
and replace the dynamics on $\bbT^2 \ti [-\tfrac{\ep}{2}, \tfrac{\ep}{2}]$
defined there
with that of the $f$ defined here.
Using \cref{lemma:vfate} and the techniques in \cite{ham20XXconstructing}
one may show that this new system has a global dominated splitting
and that, outside a basin of repulsion,
this dominated splitting is partially hyperbolic.
This establishes all of the properties listed in \cref{thm:calzone}.

As a final note, it is possible to define a variation on this example
by composing $f$ with the reflection $(x,s) \mapsto (x,-s)$.
This new system has two compact center-unstable tori which 
intersect and which are the images
of each other.

\section{No calzones} \label{sec:nocalzone} 

The last section constructed an example which was only partially hyperbolic
on a subset of $\bbT^3$.
Here we prove \cref{thm:nocalzones}, showing that the example cannot be improved
to a global partially hyperbolic splitting.
The basic idea of the proof is that the region between the two tori must have
finite volume, even after lifting to the universal cover.
This region also has unstable curves of infinite length.
The ``length-versus-volume'' argument of \cite{BBI2} then gives a contradiction.

Assume $f$ is a partially hyperbolic diffeomorphism of a
3-manifold $M$,
and that $T_0$ and $T_1$ are two intersecting
compact $cs$-submanifolds.
Up to replacing $f$ by an iterate, assume each $T_i$ is $f$-invariant.
Up to replacing $M$ by a double cover, assume $M$ is orientable.
The results in \cite{rhrhu2011tori} then imply
that $M$ is either
\begin{enumerate}
    \item the 3-torus,

    \item the suspension of ``minus the identity'' on $\bbT^2$, or
    
    \item the suspension of a hyperbolic toral automorphism on $\bbT^2$.
\end{enumerate}
We only consider the case $M = \bbT^3 = \bbR^3 / \bbZ^3.$
The other two cases have analogous proofs.
Further, after applying a $C^1$ change of coordinates to the system,
we assume without loss of generality that $T_0 = \bbT^2 \ti 0$.

The lifted partially hyperbolic map $f:\bbR^3 \to \bbR^3$
on the universal cover
is a finite distance from a map of the form $A \times \id$
where $A:\bbR^2 \to \bbR^2$ is linear and hyperbolic.
The subset $S_0 := \bbR^2 \ti 0$ covers $T_0$
and is invariant under the lifted dynamics.
By a slight abuse of notation,
if $x = (x_1,x_2,x_3)  \in  \bbR^3$ and $z = (z_1,z_2)  \in  \bbZ^2$,
then write
\[
    x + z = (x_1+z_1,x_2+z_2,x_3).
\]
Let $H:\bbR^3 \to \bbR^2$ be the Franks semiconjugacy \cite{Franks1}.
That is, $H$ is a uniformly continuous surjection such that
$H f(x) = A H(x)$ and $H(x+z) = H(x) + z$
for all $x  \in  \bbR^3$ and $z  \in  \bbZ^2$.

Up to replacing $f$ by an iterate, assume the eigenvalues of $A$
are positive.
Let $\lam > 1$ be the unstable eigenvalue.
There is a non-zero linear map $\piu : \bbR^2 \to \bbR$
such that $\piu(Av) = \lam \piu(v)$
for any $v  \in  \bbR^2$.
We will also consider $\piu$ as a map from $\bbR^3$ to $\bbR$
which depends only on the first two coordinates of $\bbR^3$.
Define $\Hu = \piu \circ H$.
One may then verify the following properties of $\Hu$
hold for any $x,y  \in  \bbR^3$ and $z  \in  \bbZ^2$.
\begin{enumerate}
    \item $\Hu(f(x)) = \lam \Hu(x)$;

    \item $\Hu(x+z) = \Hu(x) + \piu(z)$;

    \item if $x$ and $y$ are on the same stable leaf of $f$, then $\Hu(x) = \Hu(y)$;
    and

    \item there is a uniform constant $R > 0$ such that $|\Hu(x) - \piu(x)| < R$.
\end{enumerate}

\begin{lemma} \label{lemma:Uconst}
    Suppose $U$ is a non-empty proper subset of $S_0$
    which is saturated by stable leaves and
    which is invariant under translations by $\bbZ^2$.
    Then $\Hu$ is constant on any connected component of $U$.
\end{lemma}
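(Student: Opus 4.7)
The plan is to argue by contradiction. Suppose some connected component $V$ of $U$ has $\Hu|_V$ non-constant; since $V$ is connected and $\Hu$ continuous, $\Hu(V)$ is a non-trivial interval in $\bbR$. The first step is an \emph{amplification}: because $\Hu \circ f = \lam \Hu$ with $\lam > 1$ and $f$ is equivariant with respect to $\bbZ^2$-translations (conjugating translation by $w$ to translation by $Aw$, with $A \in SL(2,\bbZ)$ preserving $\bbZ^2$), the iterate $f^n(U)$ is again a non-empty, proper, $\bbZ^2$-invariant, stable-saturated subset of $S_0$, and $f^n(V)$ is a connected component of it whose $\Hu$-image is $\lam^n \cdot \Hu(V)$. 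Replacing $V$ by $f^n(V)$ for sufficiently large $n$, I may assume $\Hu(V)$ is an interval of length greater than $4R$.

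Next I construct a stable leaf to act as a topological barrier. Pick any $p \in S_0 \sans U$, which exists because $U$ is proper; the stable leaf $\Ws(p) \cap S_0$ is disjoint from $U$ by stable-saturation. Since $\piu$ corresponds to the irrational unstable direction of the hyperbolic $A$, the image $\piu(\bbZ^2) \subset \bbR$ is dense. I choose $z \in \bbZ^2$ so that $c^* := \Hu(p+z) = \Hu(p) + \piu(z)$ lies in the interior of $\Hu(V)$ at distance at least $2R$ from each endpoint; such a $z$ exists because the length of $\Hu(V)$ exceeds $4R$. Setting $\ell^* := \Ws(p+z) \cap S_0$, we have $\Hu \equiv c^*$ on $\ell^*$ and, by $\bbZ^2$-invariance, $\ell^* \cap U = \varnothing$.

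To conclude, $\ell^*$ is a complete non-compact stable leaf in $S_0 = \bbR^2$, which I will argue is a properly embedded topological line, so that it separates $\bbR^2$ into two open half-planes $H^+, H^-$. Since $V$ is connected and disjoint from $\ell^*$, it lies in one of them, say $H^+$. From $\Hu \equiv c^*$ on $\ell^*$ and $|\Hu - \piu| < R$, the leaf $\ell^*$ is confined to the strip $\{|\piu - c^*| < R\}$; the open half-plane $\{\piu < c^* - R\}$ then lies entirely in $H^-$, and combining with $|\Hu - \piu| < R$ once more yields $\{\Hu < c^* - 2R\} \subset H^-$. By the choice of $c^*$, the interval $\Hu(V)$ contains values strictly less than $c^* - 2R$, so $V$ meets $\{\Hu < c^* - 2R\} \subset H^-$, contradicting $V \subset H^+$.

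The main obstacle is the topological claim that $\ell^*$ is a properly embedded line in $\bbR^2$ rather than a circle or a non-proper arc. A compact stable leaf on $T_0 = \bbT^2$, if contractible, would bound an $f$-invariant disk in $T_0$ containing a periodic sink of zero unstable index, incompatible with the non-zero $\Eu$ direction of a partially hyperbolic splitting; if non-contractible, it unwraps to a properly embedded line in the universal cover $S_0$. Properness (closedness in $\bbR^2$) of $\ell^*$ then follows from standard facts about lifted foliations on the torus. Once these topological facts are secured, the remainder of the argument is essentially linear algebra on the strip $\{|\piu - c^*| < R\}$ together with the Intermediate Value Theorem.
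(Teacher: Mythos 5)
Your overall strategy is essentially the one in the paper, recast as a contradiction: both proofs trap a connected component of $U$ between stable leaves through $\bbZ^2$-translates of a point $p\in S_0\sans U$, use the bound $|\Hu-\piu|<R$ to confine each such leaf to a $\piu$-strip of width $2R$, and then invoke $\Hu f^n=\lam^n\Hu$ to kill the remaining $\Hu$-variation. Your amplification ``blow up $\Hu(V)$ until it has length $>4R$, then fit a separating leaf in the middle'' is just the contrapositive of the paper's ``$|\piu(x)-\piu(y)|<2R+\piu(z)$ for $x,y$ in the same component of $f^n(U)$, hence $\lam^n|\Hu(x)-\Hu(y)|$ is bounded.'' Your appeal to density of $\piu(\bbZ^2)$ is unnecessary --- the paper only needs some $z$ with $\piu(z)>0$ and sweeps with the whole family $\{\Ws(q+kz)\}_{k\in\bbZ}$ --- but this is a matter of taste, not a flaw.

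There is, however, a real gap precisely at the point you identify as the ``main obstacle,'' and the argument you supply there does not work. Stable leaves of a diffeomorphism are injectively immersed copies of $\bbR$ and are never compact, so the dichotomy ``contractible vs.\ non-contractible compact stable leaf'' you set up is vacuous; it says nothing about the lifted leaf $\ell^*$. More substantively, even after granting that $\ell^*$ is a properly embedded line in $\bbR^2$, your final step asserts that the two half-planes $\{\piu<c^*-R\}$ and $\{\piu>c^*+R\}$ lie in \emph{different} components of $\bbR^2\sans\ell^*$. This is not automatic: a properly embedded line confined to the strip $\{|\piu-c^*|<R\}$ could, a priori, be ``$U$-shaped'' (both ends escaping to $\pis\to+\infty$, say), in which case both outer half-planes lie on the same side of $\ell^*$ and your contradiction evaporates. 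To rule this out you need the additional fact that $\ell^*$ is unbounded in \emph{both} $\pis$-directions --- for instance, that $\Hs$ (or the Franks semiconjugacy $H$) restricted to a stable leaf is a homeomorphism onto the corresponding stable line of $A$, hence $\pis|_{\ell^*}$ is unbounded above and below by $|\Hs-\pis|<R$. The paper does state its version of this separation claim (the half-planes lie in $V_1$ and $V_3$, not both in one component) without a full proof, but at least phrases the correct statement; your proposal both misdiagnoses the issue (compact leaves) and then silently assumes the needed separation. With that step repaired, the rest of your argument is sound.
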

\begin{proof}
    Fix some
    non-zero element $z  \in  \bbZ^2$ with $\piu(z) > 0$ and
    consider any point $p  \in  S_0$.
    The set
    \[    
        S_0 \sans \big( \Ws(p) \cup \Ws(p+z) \big)
    \]
    consists of three connected components, $V_1$, $V_2$, $V_3$.
    Up to relabelling these components, one may show using items (3) and (4)
    of the list above that
    \[
        \big \{ x  \in  S_0 \: : \: \piu(x) < \piu(p) - R \big \} \subof V_1
    \]
    and
    \[
        \big \{ x  \in  S_0 \: : \: \piu(x) > \piu(p+z) + R \big \} \subof V_3.
    \]
    The remaining component $V_2$ is then
    is bounded in the sense that
    \[
        |\piu(x) - \piu(y)| < 2 R + \piu(z)
    \]
    for all $x,y  \in  V_2$.
    Note that this bound is independent of the choice of $p$.

    Choose a point $q$ in $S_0 \sans U$.
    Then $U$ is a subset of
    \[
        S_0 \sans \bigcup_{k  \in  \bbZ} \Ws(q + k z).
    \]
    If $x$ and $y$ are points on the same connected component
    $U_0$ of $U$, then
    one may find a point $p$ of the form  $p = q + k z$
    so that $V_2$ as defined above satisfies $U_0 \subof V_2$.
    Then, the above bound holds for $x$ and $y$.
    For any $n  \in  \bbZ$,
    the set $f^n(U)$ also satisfies the hypotheses of this lemma,
    and so
    \[
        \Big|\piu f^n(x) - \piu f^n(y) \Big| \, < \, 2 R + \piu(z).
    \]
    Then
    \[
        \lam^n \Big|\Hu(x) - \Hu(y)\Big| \, = \,
        \Big|\Hu f^n(x) - \Hu f^n(y) \Big| \, < \, 4 R + \piu(z)
    \]
    for all $n  \ge  0$, and this shows that $|\Hu(x) - \Hu(y)| = 0$.
\end{proof}

Since the tori $T_0$ and $T_1$ intersect,
we may 
lift $T_1$ to a surface $S_1$ which intersects $S_0$.
Since $S_1$ is invariant under translation by $\bbZ^2$,
there is $N > 0$ such that $S_1 \subset \bbR^2 \times [-N, N]$.

Note that $S_0 \sans S_1$ is saturated by stable leaves
and is invariant under translations by $\bbZ^2$.
Let $X_0$ be the closure of a connected component of $S_0 \without S_1$.
Using \cref{lemma:Uconst},
one can show that $\Hu$ is constant on $X_0$.
There is a corresponding set $X_1$ which the closure of
a connected component of $S_1 \without S_0$ and such that the intersection
$X_0 \cap X_1$ consists of two stable manifolds.
The union $X := X_0 \cup X_1$ may be regarded as two infinitely long
strips glued together along their boundaries.
Hence, $X$ is a properly embedded topological cylinder cutting
$\bbR^3$ into two pieces.

By adapting the proof of \cref{lemma:Uconst},
one may show that $\Hu$ is constant on $X_1$.
Then $\Hu$ is constant on all of $X$ and $\piu(X)$ is a bounded subset of $\bbR$.
Thus, one of the connected components of $\bbR^3 \without X$
is a region $Y$ with $\piu(Y)$ bounded.
Note that $\Hu$ is constant on each $f^n(\del Y)$ for $n \in \bbZ$
and therefore the length of $\piu f^n(Y)$ is uniformly bounded for all $n$.

Similarly to $\piu$, define a linear map $\pis$
such that the composition $\Hs = \pis \circ H$
satisfies $\Hs(f(x)) = \lam \inv \Hs(x)$
and $|\Hs(x) - \pis(x)| < R$
for all $x  \in  \bbR^3$.
Let $J$ be an unstable segment inside $Y$.
Then, the lengths of both $\pis f^n(J)$ and $\piu f^n(J)$
are uniformly bounded for all $n  \ge  0$.
Further, $f^n(J) \subset \bbR^2 \times [-N,N]$.
Thus, the diameters of $f^n(J)$ are uniformly bounded for all $n  \ge  0$.
The results in \cite{BBI2} imply that since the lengths of the
unstable curves $f^n(J)$ grow without bound,
their diameters as subsets of $\bbR^3$ must grow without bound as well.
This gives a contradiction.


\acknowledgement
The author thanks Rafael Potrie for helpful conversations.


\bibliographystyle{alpha}
\bibliography{dynamics}

\end{document}